\NeedsTeXFormat{LaTeX2e}

\documentclass{lms}
%Include your preferred graphics and mathematics packages here,
%using the command \usepackage{}

\usepackage{color}
\usepackage{xcolor}

\usepackage{amsmath}
\usepackage{amssymb}
\usepackage{float}
\usepackage{times}
\usepackage{enumitem} 

\usepackage{tikz}
\tikzstyle{every picture} = [scale=.6]
\tikzstyle{every node} = [draw, fill=white, circle, inner sep=0pt, minimum size=4pt, text depth=1ex]
\tikzstyle{every label} = [draw=none, rectangle, inner sep = 2pt]
\tikzstyle{n} = [draw=none, rectangle, inner sep=0pt]

%The \newtheorem command is used to define theorem-like environments
%that normally REQUIRE A PROOF, for example:
\newtheorem{theorem}{Theorem}[section] % 1st argument is your name for it
\newtheorem{lemma}[theorem]{Lemma}     % 2nd argument is what is printed
\newtheorem{corollary}[theorem]{Corollary}
\newtheorem{proposition}[theorem]{Proposition}
% Alphabetic theorems for main results
\newtheorem{mthm}{Theorem}

\newcommand{\tuple}[1]{\ensuremath{\langle{#1}\rangle}}
\newcommand{\set}[1]{\{ #1 \}}

\newcommand{\EqT}{{\rm Eq}}
\newcommand{\defiff}{\: :\Longleftrightarrow\:}

\newcommand{\f}{\varphi}
\newcommand{\p}{\psi}

\newcommand{\de}{\delta}
\newcommand{\eps}{\varepsilon}
\newcommand{\e}{{\rm e}}
\newcommand{\pd}{\cdot}
\newcommand{\jn}{\vee}
\newcommand{\mt}{\wedge}
\newcommand{\m}{\mathbf} 
\newcommand{\N}{{\mathbb N}}
\newcommand{\Z}{{\mathbb Z}}
\newcommand{\Q}{{\mathbb Q}}
\newcommand{\R}{{\mathbb R}}
\newcommand{\iv}[1]{{#1}^{-1}}

\newcommand{\De}{\mathrm{\Delta}}

\newcommand{\Si}{\mathrm{\Sigma}}

\newcommand{\lang}{{\mathcal L}}
\newcommand{\cls}[1]{\mathsf{#1}}

\newcommand{\eq}{\approx}

\newcommand{\qearrow}{\Rightarrow}

%%%%%%%%%%%%%%%%%%%%%%%%%%%%%%%%%%%%

\newcommand\nbd[1]{\protect\nobreakdash#1\hspace{0pt}}

%The \newnumbered command can be used to define environments or
%independent statements that DO NOT REQUIRE A PROOF. The usual ones are:
\newnumbered{assertion}{Assertion}    % 1st argument is your name for it
\newnumbered{conjecture}{Conjecture}  % 2nd argument is what is printed
\newnumbered{definition}{Definition}
\newnumbered{hypothesis}{Hypothesis}
\newnumbered{remark}{Remark}
\newnumbered{note}{Note}
\newnumbered{observation}{Observation}
\newnumbered{problem}{Problem}
\newnumbered{question}{Question}
\newnumbered{algorithm}{Algorithm}
\newnumbered{example}{Example}
\newunnumbered{notation}{Notation} % This is usually unnumbered
% The numbering sequence of these environments can be controlled in the
% same way as for \newtheorem; see Lamport's book on LaTeX, p. 193.

% The default LMS numbering of equations in long papers is (1.1), (1.2), (2.1), etc.
% In short papers, to change the numbering to (1), (2), etc., 'uncomment' the next line.
\simpleequations
% Otherwise, use the AMS \numberwithin command.

% TOP MATTER

\title[Equational theories of idempotent semifields]% end with percent
 {Equational theories of idempotent semifields} % This is the full title of the paper
% Use lowercase letters in title except for proper names
% Avoid equations in title if possible
% Do not use the \thanks{} command; use \extraline{} instead (see below).

\author{G. Metcalfe and S. Santschi}

%Insert `2000 Mathematics Subject Classification' numbers here:
\classno{12K10, 16Y60, 03C05, 06F15}

\extraline{Supported by Swiss National Science Foundation (SNSF) grant No. 200021\textunderscore 215157.}

\begin{document}
\maketitle

%%%%%%%%%%%%%%%%%%%%%%%%%%%%%%%%%%%%%%%%%%%%%%%%%%%%%%%%%%%

\begin{abstract}
This paper provides answers to several open problems about equational theories of idempotent semifields. In particular, it is proved that (i) no equational theory of a non-trivial class of idempotent semifields has a finite basis; (ii) there are continuum-many equational theories of classes of idempotent semifields; and (iii) the equational theory of the class of idempotent semifields is co-NP-complete. This last result is also used to determine the complexity of deciding the existence of a right order on a free group or free monoid satisfying finitely many given inequalities.
\end{abstract}

%%%%%%%%%%%%%%%%%%%%%%%%%%%%%%%%%%%%%%%%%%%%%%%%%%%%%%%%%%%

\section{Introduction}\label{s:introduction}

An {\em idempotent semiring} is an algebraic structure $\tuple{S,\jn,\pd,\e}$ satisfying
\begin{enumerate}[label=(\roman*)]
\item	$\tuple{S,\pd,\e}$ is a monoid;
\item $\tuple{S,\jn}$ is a semilattice (i.e., an idempotent commutative semigroup); and
\item $a(b\jn c)d=abd\jn acd$ for all $a,b,c,d\in S$.
\end{enumerate}
If $\tuple{S,\pd,\e}$ is the monoid reduct of a group, then $\tuple{S,\jn,\pd,\e}$ is called an {\em idempotent semifield}. Such structures arise naturally in many areas of mathematics, including  idempotent analysis, formal language theory,  tropical geometry,  and mathematical logic; for details and further references, see~\cite{Gol99,Gun98,HW95}.\footnote{Alternative definitions of an idempotent semiring (also known as a {\em dioid} or an {\em ai-semiring}) may be found in the literature that differ with respect to the presence and/or role of constant symbols in the signature. In particular, an idempotent semiring is sometimes defined without $\e$, where $\tuple{S,\pd}$ is required to be a semigroup, and sometimes with both $\e$ and a further constant symbol $0$, where $0$ is interpreted as the neutral element of $\jn$. In the latter case, the definition of an idempotent semifield is changed so that $\tuple{S{\setminus}\{0\},\pd,\e}$ is the monoid reduct of a group. As explained in Section~\ref{s:signature}, however, our results extend also to these settings.\label{f:alternatives}}  

Any idempotent semifield $\tuple{S,\jn,\pd,\e}$ expanded with the group inverse operation $\iv{}$ and lattice meet operation $\mt$ defined by setting $a\mt b:=\iv{(\iv{a}\jn\iv{b})}$ is a {\em lattice-ordered group} ({\em $\ell$\nbd{-}group}, for short): an algebraic structure $\tuple{L,\mt,\jn,\pd,\iv{},\e}$ such that $\tuple{L,\pd,\iv{},\e}$ is a group; $\tuple{L,\mt,\jn}$ is a lattice with an order defined by $x\le y\defiff x\jn y=y$; and multiplication is order-preserving, i.e., $a \le b\:\Longrightarrow\:cad \le cbd$, for all $a,b,c,d \in L$. Indeed, idempotent semifields are precisely the semiring reducts of $\ell$-groups. 

In this paper, we provide answers to several open problems about equational theories of idempotent semifields and related structures. Although these problems have been solved for $\ell$-groups, restricting to fewer operations requires the development of new proof methods and yields notably different results. 

In order to present these results, let us first  recall some basic terminology. A {\em signature} $\lang$ is a set of operation symbols with finite arities, and an {\em $\lang$-algebra} $\m{A}$ consists of a non-empty set $A$ equipped with an $n$-ary function on $A$ for each operation symbol of $\lang$ of arity $n$. An {\em $\lang$-term} is built inductively using the operation symbols of $\lang$ and a countably infinite set of variables, and the {\em $\lang$-term algebra} $\m{Tm}(\lang)$ consists of the set of $\lang$-terms equipped with the term-building operation symbols of $\lang$. An {\em $\lang$-equation} is an ordered pair of $\lang$-terms $s,t$, written $s\eq t$, and is {\em satisfied} by an $\lang$-algebra $\m{A}$, written $\m{A}\models s\eq t$, if $\f(s)=\f(t)$, for any homomorphism $\f\colon\m{Tm}(\lang)\to\m{A}$. Given any class of  $\lang$-algebras $\cls{K}$ and set of $\lang$-equations $\Si$, we denote by $\cls{K}\models\Si$ that $\m{A}\models s\eq t$ for all $\m{A}\in\cls{K}$ and $s\eq t\in\Si$. The {\em equational theory} $\EqT(\cls{K})$ of a  class of  $\lang$-algebras $\cls{K}$ is the set of all $\lang$-equations $s\eq t$ such that $\cls{K}\models s\eq t$. 

In Section~\ref{s:finite-basis-problem}, we provide a complete answer to the finite basis problem for idempotent semifields. Let $\cls{K}$ be any class of $\lang$-algebras, and call it {\em non-trivial} if at least one of its members is non-trivial, i.e., has more than one element. A {\em basis} for the equational theory of $\cls{K}$ is a set of equations $\Si\subseteq\EqT(\cls{K})$ such that every equation in $\EqT(\cls{K})$ is a logical consequence of $\Si$, that is, if $\m{A}\models\Si$ for some $\lang$-algebra $\m{A}$, then $\m{A}\models\EqT(\cls{K})$. If $\EqT(\cls{K})$ has a finite basis, then $\cls{K}$ is said to be {\em finitely based}. Notably, the equational theory of the $\ell$-group $\tuple{\Z,\min,\max,+,-,0}$ is finitely based, but this is not the case for the semifield $\tuple{\Z,\max,+,0}$ or any other totally ordered semifield~\cite[Theorem~48]{Aceto2003}. Indeed, although countably infinitely many equational theories of $\ell$-groups have a finite basis (see,~e.g.,~\cite[Section~7.2]{AF88}), we prove here that:

\newtheorem{t:basis}{Theorem~\ref{t:basis}}
\begin{t:basis*}
There is no non-trivial class of idempotent semifields that is finitely based.
\end{t:basis*}

In Section~\ref{s:cardinality-problem}, we determine the number of equational theories of classes of idempotent semifields. Using a technique of `inverse elimination' to translate between equations in the different signatures,  we obtain a one-to-one correspondence between a family of equational theories of $\ell$-groups that is known to be uncountable and equational theories of certain classes of idempotent semifields, thereby proving:

\newtheorem{t:continuum}{Theorem~\ref{t:continuum}}
\begin{t:continuum*}
There are continuum-many equational theories of classes of idempotent semifields.
\end{t:continuum*}

In Section~\ref{s:complexity}, we establish the complexity of deciding equations in the class of idempotent semifields. The equational theory of the class of $\ell$-groups is known to be co-NP-complete~\cite[Theorem~8.3]{GM16} and we prove here that this is also the case for the restricted signature, that is:

\newtheorem{t:conp}{Theorem~\ref{t:conp}}
\begin{t:conp*}
The equational theory of the class of idempotent semifields is co-NP-complete.
\end{t:conp*}

\noindent
We also use this result to show that the problem of deciding if there exists a right order on the free $n$-generated group ($n\ge 2$) whose positive cone contains a given finite set of elements is NP-complete and that the same is true for the problem of deciding if there exists a right order on the free countably infinitely generated monoid that satisfies a given finite set of inequalities.

Finally, in Section~\ref{s:signature}, we extend the results of the paper to related structures considered in the literature:  expansions of idempotent semifields with the meet operation, $\e$-free reducts of idempotent semifields, and idempotent semifields extended with a neutral element $0$ for the join operation.

%%%%%%%%%%%%%%%%%%%%%%%%%%%%%%%%%%%%%%%%%%%%%%%%%%%%%%%%%%%

\section{The finite basis problem}\label{s:finite-basis-problem}

Let $\lang_m$ and $\lang_s$ be the signatures of monoids and idempotent semirings, respectively. Following~\cite{Jac08}, let the {\em flat extension} of an $\lang_m$-algebra $\m{M} = \tuple{M,\pd,\e}$ be the $\lang_s$-algebra $\flat(\m{M})= \tuple{M\cup\set{\top},\jn,\star,\e}$, where $\top\not\in M$ and for all $a,b \in M\cup\set{\top}$,
\[
a\star b :=
\begin{cases}
a\pd b & \text{if }a,b\in M\\
\top & \text{otherwise}
\end{cases}
\qquad\text{and}\qquad
a\jn b :=
\begin{cases}
a & \text{if }a=b\\
\top & \text{otherwise.}
\end{cases}
\]
It is easily confirmed that if $\m{M}$ is a monoid, then $\tuple{M\cup\set{\top},\star,\e}$ is a monoid, $\tuple{M\cup\set{\top},\jn}$ is a semilattice of height one, and $\top$ is an absorbing element for both the binary operations of $\flat(\m{M})$. The following result provides a necessary and sufficient condition for $\flat(\m{M})$ to be an idempotent semiring. 

\begin{lemma}[cf.~{\cite[Lemma~2.2]{JRZ22}}]
Let  $\m{M}$ be any monoid. Then $\flat(\m{M})$ is an idempotent semiring if and only if $\m{M}$ is cancellative, i.e., $cad=cbd\:\Longrightarrow\:a=b$, for all $a,b,c,d\in M$.
\end{lemma}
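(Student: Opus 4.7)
The plan is to reduce everything to checking the one non-obvious axiom of an idempotent semiring, namely the distributivity law $a(b\jn c)d=abd\jn acd$, since the remark preceding the lemma already records that $\flat(\m{M})$ is always a monoid under $\star$ with a height-one semilattice join and an absorbing top. So after that reduction I would prove the biconditional by analysing when distributivity for $\star$ and $\jn$ can fail.

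For the forward direction (contrapositive), suppose $\m{M}$ is not cancellative, so pick $a,b,c,d\in M$ with $cad=cbd$ but $a\ne b$. Then $a\jn b=\top$ in $\flat(\m{M})$, so $c\star(a\jn b)\star d=\top$, whereas $(c\star a\star d)\jn(c\star b\star d)=cad\jn cbd=cad\in M$. This violates distributivity, so $\flat(\m{M})$ is not an idempotent semiring.

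For the backward direction, assume $\m{M}$ is cancellative and fix $a,b,c,d\in M\cup\set{\top}$. If any of $a,d$ equals $\top$ then both sides of the distributivity equation collapse to $\top$ because $\top$ is absorbing for $\star$, and similarly if $b=\top$ or $c=\top$ then $b\jn c=\top$ forces the left-hand side to $\top$ and the corresponding summand on the right to $\top$. In the remaining case $a,b,c,d\in M$, I would split on whether $b=c$: if $b=c$ the equation is immediate from idempotency of $\jn$; if $b\ne c$ then $b\jn c=\top$, so the left-hand side is $\top$, and cancellation (applied to $c':=a$, $d':=d$) shows $abd\ne acd$, making the right-hand side $\top$ as well.

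There is no real obstacle here: the content of the lemma is exactly the observation that the distributive law, evaluated on two singletons $\{b\}$ and $\{c\}$ joined at the top, is precisely the cancellation condition $cad=cbd\Rightarrow a=b$. The only thing to be careful about is being exhaustive in the case split when some of $a,b,c,d$ equal $\top$, so that the forward implication really does pin down two-sided cancellation and not just a weaker one-sided variant.
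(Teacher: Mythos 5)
Your proof is correct and follows essentially the same route as the paper's: both directions reduce to the distributivity axiom, the forward implication evaluates it on $c,a\jn b,d$ to force $a\jn b\neq\top$, and the converse splits on $b=c$ versus $b\neq c$ using cancellativity. The only difference is cosmetic (you argue the forward direction contrapositively and spell out the $\top$ cases that the paper leaves implicit).
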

\begin{proof}
Suppose first that $\flat(\m{M})$ is an idempotent semiring and $cad=cbd$ for some $a,b,c,d\in M$. Then $c(a\jn b)d=cad\jn cbd=cad\neq\top$, so $a\jn b\neq\top$ and $a=b$. For the converse, suppose that $\m{M}$ is cancellative and consider any $a,b,c,d\in M$. If $b=c$, then, clearly, $a(b\jn c)d=abd\jn acd$. Otherwise, $b\neq c$ and, by cancellativity, $abd\neq acd$, so  $a(b\jn c)d=a\top d =\top = abd\jn acd$.
\end{proof}

Consider now  the monoid reduct $\m{Z} = \tuple{\Z,+,0}$ of the additive integer group, and monoid reducts $\m{Z}_n = \tuple{Z_n,\pd, \e}$ of the cyclic groups of order $n\in\N^{>0}$. Since these monoids are cancellative, their flat extensions, $\flat(\m{Z})$ and $\flat(\m{Z}_n)$ ($n\in\N^{>0}$), are idempotent semirings with a flat semilattice structure, as depicted in Figure~\ref{fig:Zp-Z}. We will prove first that the equational theory of any finitely based class of idempotent semirings $\cls{K}$ is satisfied by $\flat(\m{Z})$ if and only if it is satisfied by $\flat(\m{Z}_p)$ for every prime $p$ greater than some suitably large $n\in\N$ (Corollary~\ref{c:fin-base}). We will then show that the equational theory of any non-trivial class of idempotent semifields is satisfied by $\flat(\m{Z})$, but not by $\flat(\m{Z}_n)$ for any $n\in\N^{>0}$, thereby establishing that the class is not finitely based (Theorem~\ref{t:basis}).

\begin{figure}
\begin{tikzpicture}[xscale=.8,yscale=.7]
\draw 
(-3.6,2)node{}--
(-6,5)node[label=above:$\top$]{}--
(-8.4,2)node{}
(-7.2,2)node{}--(-6,5)
(-6,2)node{}--(-6,5)
(-4.8,2)node{}--(-6,5)
(-8.65,0.4)node[n,label=above:$-2$]{}
(-7.4,0.4)node[n,label=above:$-1$]{}
(-6,0.4)node[n,label=above:$0$]{}
(-4.8,0.4)node[n,label=above:$1$]{}
(-3.6,0.4)node[n,label=above:$2$]{}
(-9.05,1.85)node[n]{$\dots$}
(-2.8,1.85)node[n]{$\dots$};
\end{tikzpicture}
\quad\quad
\begin{tikzpicture}[xscale=.8,yscale=.7]
\draw 
(-3,2)node{}--
(-6,5)node[label=above:$\top$]{}--
(-9,2)node{}
(-7.5,2)node{}--(-6,5)
(-6,2)node{}--(-6,5)
(-4.5,1.85)node[n]{$\dots$}
(-9,0.4)node[n,label=above:$\e$]{}
(-7.5,0.4)node[n,label=above:$a$]{}
(-6,0.4)node[n,label=above:$a^2$]{}
(-2.8,0.4)node[n,label=above:$a^{n-1}$]{};
\end{tikzpicture}
\centering
\caption{Hasse diagrams of $\flat(\m{Z})$ and $\flat(\m{Z}_n)$.}
\label{fig:Zp-Z}
\end{figure}

For a signature $\lang$ containing the binary operation symbol $\jn$, we call an $\lang$-equation of the form $s\jn t\eq t$, often written $s \le t$, an {\em $\lang$-inequation}. We call an $\lang_s$-inequation \emph{simple} if it is of the form $s\leq t_1\jn\cdots\jn t_n$, where $s,t_1,\dots, t_n$ are $\lang_m$-terms, and call this simple $\lang_s$-inequation {\em left-regular} if each variable occurring in $s$ occurs in at least one of $t_1,\dots,t_n$.

\begin{remark}\label{r:simplesuffice}
Clearly, an idempotent semiring satisfies an $\lang_s$-equation $s\eq t$ if and only it satisfies the $\lang_s$-inequations $s\le t$ and $t\le s$. Hence, using the distributivity of multiplication over binary joins and the fact that $a\jn b\le c \iff a\le c\text{ and }b\le c$ for all elements $a,b,c$ of an idempotent semiring, there exists an algorithm that produces for every $\lang_s$-equation $\eps$, a finite set $\Sigma$ of simple $\lang_s$-inequations such that an arbitrary idempotent semiring satisfies $\eps$ if and only if  it satisfies $\Si$.
\end{remark}

\begin{remark}\label{r:regular}
If an idempotent semiring satisfies a simple $\lang_s$-inequation that is not left-regular, then --- by substituting all variables, except for one that occurs on the left and not the right, with $\e$ --- it must also satisfy $x^n\le\e$ for some $n\in\N^{>0}$. Hence, reasoning contrapositively, if a simple $\lang_s$-inequation is satisfied by an idempotent semiring containing an element greater than $\e$, it must be left-regular. \end{remark}

For a signature $\lang$, an {\em $\lang$-quasiequation} is an ordered pair consisting of a finite set of $\lang$-equations $\Si$ and an $\lang$-equation $s\eq t$, written $\Si\qearrow s\eq t$, and is  satisfied by an $\lang$-algebra $\m{A}$ if for any homomorphism $\f\colon\m{Tm}(\lang)\to\m{A}$, whenever $\f(s')=\f(t')$ for all $s'\eq t'\in\Si$, also $\f(s)=\f(t)$. Given any simple $\lang_s$-inequation $\eps = (s \leq t_1\jn\cdots\jn t_n)$, we define a corresponding $\lang_m$-quasiequation
\[
Q(\eps) :=\set{t_1\eq t_2, \dots, t_1 \eq t_n} \qearrow t_1 \eq s.
\]

\begin{lemma}\label{l:flat-group}
Let $\m{M}$ be any monoid. Then for any left-regular simple $\lang_s$-inequation $\eps$,
\[
\flat(\m{M}) \models \eps \iff \m{M} \models Q(\eps).
\]
\end{lemma}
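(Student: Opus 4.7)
The plan is to exploit the flat structure of $\flat(\m{M})$: every element of $M\cup\set{\top}$ either equals $\top$ or lies in $M$; the element $\top$ absorbs both $\star$ and $\jn$, so a homomorphism into $\flat(\m{M})$ either keeps a given subterm inside $M$ (where $\star$ behaves as $\pd$ in $\m{M}$) or sends it to $\top$; and the flatness of the semilattice means that $a\le b$ with $b\in M$ forces $a=b$. I will treat the two directions separately, keeping $\eps$ in the form $s\le t_1\jn\cdots\jn t_n$.

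For the forward direction, suppose $\flat(\m{M})\models\eps$ and let $\f\colon\m{Tm}(\lang_m)\to\m{M}$ be an $\lang_m$-homomorphism with $\f(t_1)=\cdots=\f(t_n)$. I extend $\f$ to an $\lang_s$-homomorphism $\p\colon\m{Tm}(\lang_s)\to\flat(\m{M})$ by reusing the same variable assignment; since $M$ is closed under $\star$ and $\star$ restricts to $\pd$ on $M$, an easy induction gives $\p(u)=\f(u)$ for every $\lang_m$-term $u$. Then $\p(t_1)\jn\cdots\jn\p(t_n)=\f(t_1)\in M$, and $\flat(\m{M})\models\eps$ yields $\p(s)\le\f(t_1)$. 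Flatness of the semilattice forces $\f(s)=\p(s)=\f(t_1)$, as desired.

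For the converse, suppose $\m{M}\models Q(\eps)$ and let $\p\colon\m{Tm}(\lang_s)\to\flat(\m{M})$ be arbitrary. If some $\p(t_i)=\top$, or the $\p(t_i)$ are not all equal, then $\p(t_1)\jn\cdots\jn\p(t_n)=\top$ and the inequation holds trivially. Otherwise $\p(t_1)=\cdots=\p(t_n)=m$ for a common $m\in M$. This is where left-regularity is essential: since each $t_i$ is an $\lang_m$-term and $\top$ absorbs $\star$, $\p(t_i)\neq\top$ implies every variable occurring in $t_i$ is mapped into $M$; by left-regularity, every variable of $s$ occurs in some $t_i$, so every variable of $s$ is mapped into $M$, hence $\p(s)\in M$ as well. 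I then define a monoid homomorphism $\f\colon\m{Tm}(\lang_m)\to\m{M}$ that agrees with $\p$ on all variables sent by $\p$ into $M$ and sends the remaining variables to $\e$, so that $\f(s)=\p(s)$ and $\f(t_i)=m$ for each $i$. Applying $Q(\eps)$ yields $\p(s)=\f(s)=m$, which gives $\p(s)\le m$.

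The only step that requires genuine care is ensuring $\p(s)\in M$ in the converse: without left-regularity, a variable occurring in $s$ but in none of the $t_i$ could be sent to $\top$, forcing $\p(s)=\top$ while the $\p(t_i)$ remain in $M$, and the inequation would fail; the rest amounts to routine book-keeping with the flat semilattice order and the fact that $\lang_m$-terms stay in $M$ precisely when all their variables do.
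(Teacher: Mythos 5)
Your proof is correct and follows essentially the same route as the paper's: the forward direction extends a monoid homomorphism witnessing a failure of $Q(\eps)$ to $\flat(\m{M})$ and uses the flatness of the order, while the converse uses left-regularity together with the absorbing behaviour of $\top$ to ensure $\p(s)\in M$ and restrict $\p$ to a homomorphism into $\m{M}$. The only cosmetic difference is that the paper argues both directions contrapositively, whereas you argue them directly; the content is identical.
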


\begin{proof}
Let $\eps = (s \leq t_1\jn\cdots\jn t_n)$ be any left-regular simple $\lang_s$-inequation. 

For the left-to-right direction, suppose contrapositively that $\m{M} \not\models Q(\eps)$. Then $\f(t_1) =\cdots = \f(t_n)$ and $\f(t_1)\neq\f(s)$ for some homomorphism $\f\colon\m{Tm}(\lang_m)\to\m{M}$. Let $\hat{\f}\colon\m{Tm}(\lang_s)\to\flat(\m{M})$ be the homomorphism defined by setting $\hat{\f}(x):=\f(x)$ for every variable $x$. Clearly, $\hat{\f}(u)=\f(u)$ for each $\lang_m$-term $u$ and therefore $\hat{\f}(t_1\jn\cdots\jn t_n)=\hat{\f}(t_1)\jn\cdots\jn\hat{\f}(t_n)=\f(t_1)\jn\cdots\jn\f(t_n)=\f(t_1)\neq\f(s)=\hat{\f}(s)$. But $\f(t_1),\f(s)\in M$, so $\hat{\f}(t_1\jn\cdots\jn t_n)\not\le\hat{\f}(s)$. Hence $\flat(\m{M}) \not\models\eps$.

For the converse direction, suppose contrapositively that $\flat(\m{M}) \not \models \eps$. Then $\p(s) \nleq \p(t_1\jn\cdots\jn t_n)$ for some homomorphism  $\p\colon\m{Tm}(\lang_s)\to\flat(\m{M})$. It follows from the definition of the order of $\flat(\m{M})$ that $\p(t_1\jn\cdots\jn t_n)\neq\top$ and $\p(t_1)=\cdots=\p(t_n)\neq\top$. Hence, since $\eps$ is left-regular, $\p(x)\in M$ for every variable $x$ occurring in $\eps$, and there exists a homomorphism $\hat{\p}\colon\m{Tm}(\lang_m)\to\m{M}$ satisfying $\hat{\p}(x)=\p(x)$ for all such $x$. Clearly, $\hat{\p}(t_1)=\cdots=\hat{\p}(t_n)$ and $\hat{\p}(t_1) \neq\hat{\p}(s)$. So $\m{M} \not\models Q(\eps)$.
\end{proof}

\begin{remark}
The right-to-left direction of Lemma~\ref{l:flat-group} does not hold for all simple $\lang_s$-inequations that are not left-regular; e.g., $\m{Z}_2 \models \emptyset\qearrow \e \eq x^2$,  but $\flat(\m{Z}_2) \not\models x^2 \le \e$.
\end{remark}

\begin{lemma}\label{l:p-group}
Let $\De$ be any finite set of $\lang_m$-quasiequations. Then $\m{Z}\models \De$ if and only if there exists an $n\in\N$ such that $\m{Z}_p \models \De$ for every prime $p>n$.   
\end{lemma}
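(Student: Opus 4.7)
My plan is to reduce the statement to standard linear algebra over $\Z$ and modulo primes. Since $\m{Z}$ is commutative, any $\lang_m$-term $s$ with $c_i$ occurrences of the variable $x_i$ satisfies $\f(s) = \sum_{i=1}^n c_i \f(x_i)$ under every homomorphism $\f\colon\m{Tm}(\lang_m)\to\m{Z}$. Hence an $\lang_m$-equation $s\eq t$ in variables $x_1,\dots,x_n$ corresponds to a single linear relation $v\cdot a = 0$, where $v\in\Z^n$ is the vector of coefficient differences of $s$ and $t$ and $a = (\f(x_1),\dots,\f(x_n))$. The same translation applies verbatim to each $\m{Z}_p$, yielding a linear relation over $\Z/p\Z$. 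Thus each quasiequation in $\De$ becomes an implication of the form ``for all $a$, if $w_1\cdot a = \dots = w_k\cdot a = 0$ then $v\cdot a = 0$'' with $w_1,\dots,w_k,v\in\Z^n$, and the task reduces to showing that each such implication holds universally over $\Z$ iff it holds modulo $p$ for all sufficiently large primes $p$.

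For the forward direction, since the relations are homogeneous, the implication holds for all $a\in\Z^n$ iff it holds for all $a\in\Q^n$, iff (by standard linear algebra) $v$ lies in the $\Q$-row span of $w_1,\dots,w_k$. Writing $v = \sum_j \la_j w_j$ with $\la_j\in\Q$ and letting $N$ be a positive integer clearing all these denominators, one has $Nv = \sum_j(N\la_j)w_j$ as an identity in $\Z^n$. For any prime $p > N$ the scalar $N$ is invertible modulo $p$, so $v$ lies in the row span of $w_1,\dots,w_k$ modulo $p$, and the implication is satisfied by $\m{Z}_p$. Taking the maximum of such $N$'s over the finitely many quasiequations in $\De$ yields the required threshold $n$.

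For the converse, I would argue contrapositively: if some quasiequation in $\De$ fails in $\m{Z}$, then $v$ is not in the $\Q$-row span of the corresponding $w_j$, so there exists $a\in\Q^n$, and hence after clearing denominators $a\in\Z^n$, with $w_j\cdot a = 0$ for all $j$ but $v\cdot a = c\ne 0$. For every prime $p > |c|$, reducing $a$ modulo $p$ produces a witness in $\m{Z}_p^n$ still satisfying the premises but violating the conclusion; hence infinitely many primes fail $\De$, contradicting the right-hand side.

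The plan is essentially routine; the substantive content is the folklore fact that a $\Q$-linear dependence specializes to a dependence modulo $p$ for all but finitely many primes, and conversely a $\Q$-witness of non-dependence can be reduced modulo any prime not dividing a single nonzero integer. The only care needed is the bookkeeping of $N$ and $|c|$ so as to obtain a single uniform threshold $n$ valid simultaneously for the finitely many quasiequations in $\De$.
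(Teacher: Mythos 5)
Your proof is correct, but it takes a genuinely different route from the paper's. You exploit the commutativity of $\m{Z}$ and $\m{Z}_p$ to translate each $\lang_m$-quasiequation into a homogeneous linear implication ``$w_1\cdot a=\cdots=w_k\cdot a=0\Rightarrow v\cdot a=0$'' over $\Z$, respectively $\Z/p\Z$, and then settle both directions by the standard facts that such an implication holds over $\Q$ (equivalently, over $\Z$) iff $v\in\mathrm{span}_\Q\set{w_1,\dots,w_k}$, that a $\Q$-linear dependence specializes modulo every prime not dividing a fixed denominator $N$, and that an integer witness of failure survives reduction modulo every prime exceeding $|v\cdot a|$. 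The paper instead argues model-theoretically: for the right-to-left direction it uses that quasiequations are preserved by direct products and subalgebras together with the embedding of $\m{Z}$ into $\prod_{p\in P_n}\m{Z}_p$; for the left-to-right direction it uses that an $\lang_m$-quasiequation holds in $\m{Z}$ iff it holds in all torsion-free Abelian groups, and then first-order compactness to extract a finite torsion bound $n$. Your argument is more elementary and self-contained, and it has the added benefit of producing an explicit, computable threshold $n$ (a common denominator, respectively the absolute value of a witness), whereas the compactness step in the paper is non-constructive; the paper's argument is shorter and avoids the coefficient bookkeeping, at the cost of invoking the characterization of quasiequations valid in $\m{Z}$ via torsion-free Abelian groups. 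Both arguments rest essentially on commutativity, so neither generalizes beyond the Abelian setting needed here.
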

\begin{proof}
For the right-to-left direction, suppose that there exists an $n \in \N$ such that $\m{Z}_p \models \De$ for every $p\in P_n:=\set{p\in\N\mid \text{$p>n$ and $p$ is prime}}$. Since $\De$ is a set of  $\lang_m$-quasiequations, $\prod_{p\in P_n} \m{Z}_p\models\De$. The projection maps $\pi_p\colon\m{Z}\to\m{Z}_p$ $(p\in P_n)$ induce an embedding of $\m{Z}$ into $\prod_{p\in P_n} \m{Z}_p$, so also $\m{Z}\models \De$. 

For the converse direction, suppose that $\m{Z}\models \De$. Let $T$ be a set of first-order sentences axiomatizing the class of Abelian groups and let $\alpha$ be the conjunction of the  $\lang_m$-quasiequations in~$\De$.  Since an $\lang_m$-quasiequation is satisfied by all torsion-free Abelian groups if and only if it is satisfied by $\m{Z}$, it follows that $T\cup\set{\set{x^k\eq\e} \qearrow x\eq\e\mid k\in\N}\models\alpha$. By compactness, there exists an $n \in \N$ such that $T\cup\{\set{x^k\eq\e}\qearrow x\eq\e\mid  k\in\N\text{ and }k\le n\}\models\alpha$. Hence $\m{Z}_p \models \De$ for every prime $p>n$.   
\end{proof}

\begin{proposition}\label{p:ZtoZp}
Let $\Si$ be any finite set of left-regular simple $\lang_s$-inequations. Then $\flat(\m{Z})\models \Si$ if and only if there exists an $n\in\N$ such that $\flat(\m{Z}_p) \models \Si$ for every prime $p>n$.   
\end{proposition}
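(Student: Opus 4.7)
The proposition should follow by composing the two preceding lemmas, using the left-regularity hypothesis to pass cleanly between the semiring and monoid worlds. The plan is as follows.

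First I would associate to the given finite set $\Si$ of left-regular simple $\lang_s$-inequations the finite set of $\lang_m$-quasiequations $\De := \set{Q(\eps)\mid \eps\in\Si}$. Since every member of $\Si$ is left-regular and simple, Lemma~\ref{l:flat-group} applies uniformly to both $\m{Z}$ and each $\m{Z}_p$ (as $\lang_m$-algebras): for each $\eps\in\Si$,
\[
\flat(\m{Z})\models\eps \iff \m{Z}\models Q(\eps),
\qquad
\flat(\m{Z}_p)\models\eps \iff \m{Z}_p\models Q(\eps).
\]
Consequently, $\flat(\m{Z})\models\Si$ if and only if $\m{Z}\models\De$, and for each prime $p$, $\flat(\m{Z}_p)\models\Si$ if and only if $\m{Z}_p\models\De$.

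With this translation in hand, the proposition reduces immediately to Lemma~\ref{l:p-group} applied to the finite set $\De$ of $\lang_m$-quasiequations: $\m{Z}\models\De$ if and only if there exists an $n\in\N$ such that $\m{Z}_p\models\De$ for every prime $p>n$. Chaining the two equivalences gives precisely the claim.

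There is essentially no serious obstacle here, since Lemma~\ref{l:flat-group} and Lemma~\ref{l:p-group} were designed to dovetail; the only subtlety to verify in writing out the proof is that the hypothesis that every $\eps\in\Si$ is left-regular is exactly what is needed to apply Lemma~\ref{l:flat-group} in both directions for each of $\m{Z}$ and $\m{Z}_p$, and in particular ensures that the remark following Lemma~\ref{l:flat-group} (where the right-to-left direction can fail) does not obstruct the argument. No further input about $\m{Z}$ or the cyclic groups is required beyond what is already encapsulated in Lemma~\ref{l:p-group}.
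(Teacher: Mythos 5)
Your proposal is correct and matches the paper's proof essentially verbatim: the paper likewise sets $\De := \set{Q(\eps)\mid\eps\in\Si}$ and chains Lemma~\ref{l:flat-group} (applied to $\m{Z}$ and to each $\m{Z}_p$, using left-regularity) with Lemma~\ref{l:p-group}. Nothing is missing.
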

\begin{proof*}
Let $\De:=\set{Q(\eps)\mid\eps\in\Si}$. Then
\begin{align*}
 \flat(\m{Z}) \models \Si  &\iff \m{Z} \models\De & \text{(Lemma~\ref{l:flat-group})} \\
&\iff \text{there exists an $n\in \N$ such that $\m{Z}_p\models\De$ for every prime $p>n$} & \text{(Lemma~\ref{l:p-group})}\\
&\iff \text{there exists an $n\in \N$ such that $\flat(\m{Z}_p)\models \Si$ for every prime $p>n$} & \text{(Lemma~\ref{l:flat-group})}
&.&\proofbox
\end{align*}
\end{proof*}

\begin{corollary}\label{c:fin-base}
Let $\cls{K}$ be any finitely based class of idempotent semirings. Then $\flat(\m{Z}) \models \EqT(\cls{K})$ if and only if there exists an $n\in \N$ such that $\flat(\m{Z}_p) \models \EqT(\cls{K})$ for every prime $p>n$.
\end{corollary}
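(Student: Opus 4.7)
The plan is to reduce the statement to Proposition~\ref{p:ZtoZp} by rewriting $\EqT(\cls{K})$ as an equivalent finite set of left-regular simple $\lang_s$-inequations. First, I would invoke the finite basis hypothesis to fix a finite basis $\Si_0$ of $\EqT(\cls{K})$, and then apply Remark~\ref{r:simplesuffice} to each equation in $\Si_0$ to obtain a finite set $\Si$ of simple $\lang_s$-inequations such that every idempotent semiring satisfies $\Si_0$ if and only if it satisfies $\Si$. Since $\flat(\m{Z})$ and each $\flat(\m{Z}_p)$ are idempotent semirings and $\Si_0$ is a basis of $\EqT(\cls{K})$, satisfying $\EqT(\cls{K})$ is, for each of these flat extensions, equivalent to satisfying $\Si$.

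Next, I would verify the left-regularity hypothesis required to invoke Proposition~\ref{p:ZtoZp}. Both $\flat(\m{Z})$ and each $\flat(\m{Z}_p)$ contain $\top$ as an element strictly above $\e$, so Remark~\ref{r:regular} guarantees that any simple $\lang_s$-inequation satisfied by any of these structures must be left-regular. In the forward direction of the corollary, the hypothesis $\flat(\m{Z}) \models \Si$ forces every $\eps \in \Si$ to be left-regular; in the backward direction, the hypothesis $\flat(\m{Z}_p) \models \Si$ for all primes $p > n$ does the same via any one such $\flat(\m{Z}_p)$. Either way, Proposition~\ref{p:ZtoZp} applies to $\Si$ and yields the equivalence, which then translates back to $\EqT(\cls{K})$ through the first step.

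I do not foresee any serious obstacle: the corollary is essentially a packaging of Proposition~\ref{p:ZtoZp}. The only point requiring attention is confirming the left-regularity hypothesis, and this is handled by the simple observation that $\top$ sits strictly above $\e$ in every flat extension, together with Remark~\ref{r:regular}. The finite basis hypothesis enters exactly once, to secure the finite $\Si_0$ (and hence the finite $\Si$) needed to feed into Proposition~\ref{p:ZtoZp}.
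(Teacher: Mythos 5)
Your proposal is correct and follows essentially the same route as the paper: replace the finite basis by a finite set $\Si$ of simple $\lang_s$-inequations via Remark~\ref{r:simplesuffice}, use Remark~\ref{r:regular} (the element $\top>\e$ in any flat extension) to secure left-regularity, and then apply Proposition~\ref{p:ZtoZp} and translate back through the basis property. Your explicit case split on which hypothesis supplies left-regularity is just a slightly more careful rendering of the paper's blanket observation that every simple $\lang_s$-inequation satisfied by some $\flat(\m{M})$ is left-regular.
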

\begin{proof}
By assumption and Remark~\ref{r:simplesuffice}, there exists a finite basis $\Si$ for $\EqT(\cls{K})$ that consists of simple $\lang_s$-inequations. Moreover, by Remark~\ref{r:regular}, every simple $\lang_s$-inequation satisfied by $\flat(\m{M})$ for some monoid $\m{M}$ is left-regular. Hence $\flat(\m{Z}) \models\Si$ if and only there exists an $n\in \N$ such that $\flat(\m{Z}_p) \models\Si$ for every prime $p>n$, by Proposition~\ref{p:ZtoZp}. The claim therefore follows directly using the fact that $\Si$ is a basis for $\EqT(\cls{K})$.
\end{proof}

\begin{mthm}\label{t:basis}
There is no non-trivial class of idempotent semifields that is finitely based.
\end{mthm}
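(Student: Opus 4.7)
The plan is to contradict Corollary~\ref{c:fin-base}: assuming $\cls{K}$ is a non-trivial finitely based class of idempotent semifields, I will show that $\flat(\m{Z}) \models \EqT(\cls{K})$ but $\flat(\m{Z}_n) \not\models \EqT(\cls{K})$ for every $n \geq 2$, so that no $n \in \N$ can satisfy the right-hand condition of the corollary.

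For the first claim, by Remark~\ref{r:simplesuffice} and Remark~\ref{r:regular} it suffices to show that every left-regular simple $\lang_s$-inequation $\eps = (s \leq t_1 \jn \cdots \jn t_n)$ satisfied by $\cls{K}$ is satisfied by $\flat(\m{Z})$; by Lemma~\ref{l:flat-group} this reduces to showing $\m{Z} \models Q(\eps)$. If $\m{Z} \not\models Q(\eps)$, then some homomorphism $\f\colon \m{Tm}(\lang_m) \to \m{Z}$ satisfies $\f(t_1) = \cdots = \f(t_n) = c$ and $\f(s) \neq c$; using the commutativity of $\m{Z}$, replacing $\f$ by $-\f$ if needed, I may assume $\f(s) > c$. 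Fixing a non-trivial $\m{S} \in \cls{K}$ and a positive element $b := a \jn \iv{a} > \e$ for some $a \neq \e$ in the underlying $\ell$-group, I define $\psi(x) := b^{\f(x)}$ on variables and extend to an $\lang_s$-homomorphism $\psi\colon \m{Tm}(\lang_s) \to \m{S}$. Since powers of $b$ commute and the chain $\langle b \rangle \subseteq \m{S}$ is order-isomorphic to $\Z$, we have $\psi(t) = b^{\f(t)}$ for every $\lang_m$-term $t$; hence $\psi(s) = b^{\f(s)} > b^c = \psi(t_1 \jn \cdots \jn t_n)$, contradicting $\m{S} \models \eps$.

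For the second claim, I exhibit for each $n \geq 2$ an inequation $\eps_n \in \EqT(\cls{K})$ failing in $\flat(\m{Z}_n)$, namely $\eps_n := (x \leq x^n \jn \e)$; note that $Q(\eps_n)$ is essentially the torsion-freeness quasi-equation, which holds in $\Z$ but not in $\Z_n$. Failure in $\flat(\m{Z}_n)$ is immediate upon setting $x$ to the generator: $x^n \jn \e = \e$ while $x \not\leq \e$ in the flat semilattice. For validity in every idempotent semifield, I work inside the ambient $\ell$-group: by distributivity and the fact that powers of $x$ commute,
\[
(x^{n-1} \jn \iv{x})^n = \bigvee_{k=0}^n x^{n(k-1)},
\]
which contains $\e$ (the $k=1$ summand), so $(x^{n-1} \jn \iv{x})^n \geq \e$. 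The isolation law $y^n \geq \e \Rightarrow y \geq \e$ in $\ell$-groups (a consequence of Holland's embedding into $\Aut(\Omega)$) then gives $x^{n-1} \jn \iv{x} \geq \e$, and right-multiplication by $x$ yields $x^n \jn \e \geq x$.

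I expect the first step to be the main obstacle, since transferring a failure from $\m{Z}$ to $\cls{K}$ exploits both the commutativity of $\m{Z}$ (to adjust the direction of the inequation by negating $\f$) and the fact that every non-trivial $\ell$-group contains a positive element generating an ordered copy of $\Z$ over which powers of a variable can be coherently interpreted.
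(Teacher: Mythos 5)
Your proof is correct and follows essentially the same route as the paper: it combines Corollary~\ref{c:fin-base} with the failure of $x\le\e\jn x^n$ in $\flat(\m{Z}_n)$ and the satisfaction of $\EqT(\cls{K})$ by $\flat(\m{Z})$, the latter obtained by interpreting a counterexample over $\m{Z}$ along an ordered copy of $\tuple{\Z,\max,+,0}$ generated by a strictly positive element of a non-trivial member of $\cls{K}$. The only differences are cosmetic: you verify the failure in $\flat(\m{Z}_n)$ directly rather than via Lemma~\ref{l:flat-group}, and you supply an explicit $\ell$-group derivation of $x\le\e\jn x^n$, which the paper merely asserts.
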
 
\begin{proof} 
Let $\cls{K}$ be any non-trivial class of idempotent semifields. Consider first any $n\in\N^{>0}$. Since the $\lang_s$-inequation $x\leq\e\jn x^n$ is satisfied by all $\ell$-groups, it belongs to $\EqT(\cls{K})$. On the other hand,  $\m{Z}_n \not\models\set{x^n\eq\e} \qearrow x\eq \e$ yields $\flat(\m{Z}_n) \not\models x \leq \e\jn x^n$, by Lemma~\ref{l:flat-group}, so $\flat(\m{Z}_n) \not\models \EqT(\cls{K})$. Hence, by Corollary~\ref{c:fin-base}, to show that $\cls{K}$ is not finitely based, it suffices to prove that $\flat(\m{Z}) \models \EqT(\cls{K})$. Moreover, by Remark~\ref{r:simplesuffice}, it suffices to show that $\flat(\m{Z})$ satisfies every simple $\lang_s$-inequation in $\EqT(\cls{K})$, recalling that, by Remark~\ref{r:regular}, since $\cls{K}$ is non-trivial, these simple $\lang_s$-inequations are all left-regular.

Let $\eps = (s\leq t_1\jn\cdots\jn t_n)$ be any left-regular simple $\lang_s$-inequation and suppose contrapositively that $\flat(\m{Z}) \not\models \eps$. Then $\m{Z} \not\models Q(\eps)$, by Lemma~\ref{l:flat-group}, i.e., there exists a homomorphism $\f\colon\m{Tm}(\lang_m)\to\m{Z}$ such that $\f(t_1) =\cdots = \f(t_n)$ and $\f(t_1) \neq \f(s)$. Moreover, we can assume that $\f(t_1) < \f(s)$ in the standard order on $\Z$, and let $\hat{\f}\colon\m{Tm}(\lang_s)\to\tuple{\Z,\max,+,0}$ be the homomorphism defined by setting $\hat{\f}(x):=\f(x)$ for each variable~$x$. Then $\hat{\f}(s)=\f(s) > \f(t_1)= \hat{\f}(t_1)= \max\{ \hat{\f}(t_1), \dots, \hat{\f}(t_n)\}=\hat{\f}(t_1\jn\cdots\jn t_n)$. So $\tuple{\Z,\max, +,0} \not\models \eps$. Now consider any non-trivial $\m{A}\in\cls{K}$ and $a\in A$ with $a>\e$. The homomorphism induced by mapping $1$ to $a$ embeds $\tuple{\Z,\max, +,0}$ into $\m{A}$, so also $\m{A}\not\models \eps$ and $\eps\not\in\EqT(\cls{K})$.
\end{proof}

Let us remark finally that the approach followed in this section to establish Theorem~\ref{t:basis} extends to a broader class of idempotent semirings. More precisely, there is no finitely based class of idempotent semirings $\cls{K}$  such that $\flat(\m{Z}) \models \EqT(\cls{K})$ and $\cls{K}\models x\leq\e\jn x^n$ for every $n\in \N^{>0}$. In particular,  the class of totally ordered idempotent semirings is not finitely based, which follows also from~\cite[Theorem~48]{Aceto2003}.

%%%%%%%%%%%%%%%%%%%%%%%%%%%%%%%%%%%%%%%%%%%%%%%%%%%%%%%%%%%

\section{The number of equational theories}\label{s:cardinality-problem}

For any countable signature $\lang$ and class of $\lang$-algebras $\cls{K}$, there can be at most continuum-many equational theories of subclasses of $\cls{K}$. In particular, although there are just two equational theories of classes of commutative idempotent semifields --- $\EqT(\tuple{\Z,\max,+,0})$ and the set of all $\lang_s$-equations --- the maximum number is attained in the setting of commutative idempotent semirings.

\begin{proposition}\label{p:cont-subvar}
There are continuum-many equational theories of classes of commutative idempotent semirings.
\end{proposition}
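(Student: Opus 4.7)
The plan is to construct, for each subset $S$ of the set of primes, a class $\cls{K}_S$ of commutative idempotent semirings, and to identify a family of $\lang_s$-equations that separates these classes; since there are continuum-many subsets of primes, this will yield continuum-many pairwise distinct equational theories, matching the upper bound of $2^{\aleph_0}$ noted just before the statement. The building blocks will be the flat extensions $\flat(\m{Z}_p)$ of the cyclic groups $\m{Z}_p$ of prime order $p$, each of which is a commutative idempotent semiring: $\m{Z}_p$ is a commutative group, hence a commutative cancellative monoid (so the opening lemma of Section~\ref{s:finite-basis-problem} applies), and commutativity of $\m{Z}_p$ plainly transfers to $\flat(\m{Z}_p)$.

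Next, for each prime $p$, consider the left-regular simple $\lang_s$-inequation $\eps_p := (x \leq \e \jn x^p)$. By Lemma~\ref{l:flat-group}, $\flat(\m{Z}_q) \models \eps_p$ iff $\m{Z}_q$ satisfies the quasiequation $Q(\eps_p) = (\{x^p \eq \e\} \qearrow x \eq \e)$; in a cyclic group of prime order $q$, this holds iff $\gcd(p,q) = 1$, which, both numbers being prime, is equivalent to $p \neq q$. Setting $\cls{K}_S := \{\flat(\m{Z}_p) : p \in S\}$ for each set $S$ of primes, it follows that $\cls{K}_S \models \eps_p$ iff $p \notin S$. Hence for any two distinct $S, T$, taking without loss of generality some $p \in S \setminus T$, the equation $\eps_p$ lies in $\EqT(\cls{K}_T) \setminus \EqT(\cls{K}_S)$, and so the two equational theories differ.

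There is no genuine obstacle once Section~\ref{s:finite-basis-problem} is in place: the crux is merely choosing the family $\eps_p$ so that each $Q(\eps_p)$ is the natural torsion-type quasiequation picking out cyclic groups of orders coprime to $p$. Everything else follows by direct application of Lemma~\ref{l:flat-group} together with the observation that the $\flat(\m{Z}_p)$ are commutative idempotent semirings, so that the $2^{\aleph_0}$ distinct classes $\cls{K}_S$ realise $2^{\aleph_0}$ distinct equational theories of classes of commutative idempotent semirings.
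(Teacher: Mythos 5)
Your proposal is correct and follows essentially the same route as the paper: both arguments separate continuum-many equational theories using the inequations $x \leq \e\jn x^p$ together with the flat extensions $\flat(\m{Z}_p)$ and Lemma~\ref{l:flat-group}, via the observation that $\m{Z}_q$ satisfies $\set{x^p\eq\e}\qearrow x\eq\e$ exactly when $p\neq q$. The only cosmetic difference is that you take the classes to be $\set{\flat(\m{Z}_p) : p\in S}$ directly, whereas the paper takes the classes of all commutative idempotent semirings satisfying $x\leq\e\jn x^p$ for $p$ in a given set of primes and uses $\flat(\m{Z}_p)$ only as the separating witness.
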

\begin{proof}
For any set of primes $P$, let $\Si_P$ denote the equational theory of the class of commutative idempotent semirings satisfying $x \leq \e\jn x^p$ for all $p\in P$. We show that $\Si_P \neq \Si_Q$ for any two distinct sets of primes $P$ and $Q$, and hence that there are continuum-many such equational theories. Consider, without loss of generality, $p\in P{\setminus}Q$. Then $x \leq \e\jn x^p\in \Si_P$. But also, since $\m{Z}_p \not\models\set{\e\eq x^p}\qearrow\e\eq x$ and $\m{Z}_p \models \{\e \eq x^q\} \qearrow \e \eq x$ for all $q\in Q$, it follows from Lemma~\ref{l:flat-group} that $\flat(\m{Z}_p) \not\models x \leq \e\jn x^p$ and $\flat(\m{Z}_p) \models x \leq \e\jn x^q$  for all $q\in Q$. So $x \leq \e\jn x^p\not\in \Si_Q$. 
\end{proof}

To prove that the maximum number of equational theories is attained also in the setting of idempotent semifields (Theorem~\ref{t:continuum}), we make use of a corresponding result for a certain class of $\ell$-groups. Note first that, unlike idempotent semifields, $\ell$-groups form a {\em variety}: a class of algebras of the same signature that is closed under taking homomorphic images, subalgebras, and direct products --- or, equivalently, by Birkhoff's theorem, an equational class. Equational theories of classes of $\ell$-groups are therefore in one-to-one correspondence with varieties of $\ell$-groups. In particular, equational theories of classes of totally ordered groups correspond to varieties of $\ell$-groups that are {\em representable}, that is, subalgebras of direct products of totally ordered groups. For further details and references, we refer to~\cite{AF88}.

Let $\lang_g$ and $\lang_\ell$ be the signatures of groups and $\ell$-groups, respectively. A crucial role in our proof of Theorem~\ref{t:continuum} will be played by the following result, recalling that a variety $\cls{V}_1$ is defined relative to a variety $\cls{V}_2$ by a set of equations $\Si$ if $\cls{V}_1$ consists of all the members of $\cls{V}_2$ that satisfy $\Si$:

\begin{theorem}[{\cite[Theorem~1]{KM77}}] \label{t:uncountablereplgroups}
There are continuum-many varieties defined relative to the variety of representable $\ell$-groups by a set of $\lang_g$-equations.
\end{theorem}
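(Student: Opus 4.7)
The plan is to exhibit an uncountable family $\{V_\alpha\}_{\alpha \in A}$ of pairwise distinct subvarieties of the variety $\cls{R}$ of representable $\ell$-groups, each axiomatized relative to $\cls{R}$ by a set of $\lang_g$-equations.

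First, I would reduce the problem to a question about varieties of groups. A representable $\ell$-group embeds into a product of totally ordered groups, and an $\lang_g$-equation $s \eq t$ holds in such a product if and only if it holds in every totally ordered factor. Consequently, the subvariety of $\cls{R}$ axiomatized relative to $\cls{R}$ by a set $\Si$ of $\lang_g$-equations is determined by the set of $\lang_g$-equations holding in all of its totally ordered members, and two such subvarieties coincide exactly when their collections of satisfied group identities agree. The task therefore reduces to producing continuum-many pairwise distinct sets of the form $\bigcap_{\m{T} \in \cls{C}} \EqT_{\lang_g}(\m{T})$, where $\cls{C}$ ranges over classes of totally ordered groups.

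The core step is to construct an uncountable family $\{\m{T}_\alpha\}_{\alpha \in A}$ of totally ordered groups, together with $\lang_g$-equations $w_\alpha \eq \e$, such that $\m{T}_\alpha \models w_\alpha \eq \e$ while $\m{T}_\beta \not\models w_\alpha \eq \e$ for every $\beta \neq \alpha$. Letting $V_\alpha$ be the subvariety of $\cls{R}$ axiomatized by the group-theoretic identities of $\m{T}_\alpha$ then yields continuum-many pairwise distinct subvarieties of $\cls{R}$, each axiomatized by $\lang_g$-equations.

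The main obstacle --- and the reason this result is non-trivial --- lies in constructing the discriminating family of totally ordered groups: standard orderable groups such as free abelian, free nilpotent, or free metabelian groups give rise only to countably many varieties. The construction must combine orderability-preserving operations (for instance, iterated extensions by torsion-free abelian groups, or wreath products with $\Z$) with a combinatorial device producing independent group identities, in the spirit of classical constructions exhibiting $2^{\aleph_0}$ varieties of groups. The full argument requires the explicit group-theoretic construction from~\cite{KM77}.
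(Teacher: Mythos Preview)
The paper does not give a proof of this theorem; it is quoted from \cite{KM77} and used as a black box. Your outline correctly identifies that the crux is a group-theoretic construction of orderable groups satisfying independent identities, and you rightly defer the explicit construction to the cited source.

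However, your ``core step'' as written cannot succeed. You ask for an uncountable family $\{\m{T}_\alpha\}_{\alpha\in A}$ together with $\lang_g$-equations $w_\alpha\eq\e$ such that $\m{T}_\alpha\models w_\alpha\eq\e$ but $\m{T}_\beta\not\models w_\alpha\eq\e$ whenever $\beta\neq\alpha$. This forces the $w_\alpha$ to be pairwise distinct, yet there are only countably many $\lang_g$-equations, so no such uncountable family can exist. The correct shape of the argument is to produce a \emph{countable} family $\{\m{T}_n\}_{n\in\N}$ of totally ordered groups and $\lang_g$-equations $w_n\eq\e$ that are independent --- for instance, $\m{T}_m\models w_n\eq\e$ if and only if $m\neq n$ --- and then to pass to subsets: for each $S\subseteq\N$ let $\cls{V}_S$ be the subvariety of representable $\ell$-groups axiomatized by $\{w_n\eq\e\mid n\in S\}$, and observe that for $S\neq S'$ any $m$ in the symmetric difference gives $\m{T}_m$ belonging to exactly one of $\cls{V}_S,\cls{V}_{S'}$. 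Your final paragraph already hints at this (``independent group identities''); the fix is simply to replace the uncountable discriminating family by a countable independent one and obtain the continuum by varying over subsets of $\N$.
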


\noindent
To prove that there are continuum-many equational theories of classes of idempotent semifields, it therefore suffices, by Theorem~\ref{t:uncountablereplgroups}, to show that any two varieties defined relative to the variety of representable $\ell$-groups by sets of $\lang_g$-equations can be distinguished by an $\lang_s$-equation. As we show below, such equations can be obtained by `eliminating inverses' from $\lang_\ell$-inequations.

Let us say that a variety $\cls{V}$ of $\ell$-groups has the {\em product-splitting property} if for any $\lang_{\ell}$-terms $s,t,u$ and any variable $y$ that does not occur in $s,t,u$,
\[
\cls{V}\models\e \le u\jn st\iff\cls{V}\models\e \le u\jn sy\jn \iv{y}t.
\]

\begin{lemma}\label{l:repdens}
Let $\cls{V}$ be any variety that is defined relative to the variety of representable $\ell$-groups by a set of $\lang_g$-equations. Then $\cls{V}$ has the product-splitting property.
\end{lemma}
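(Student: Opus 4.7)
My plan is to use the fact that any subvariety $\cls{V}$ of the representable $\ell$-groups is generated by its totally ordered members, so both directions of the product-splitting iff reduce to verifying the corresponding statement in every totally ordered $H\in\cls{V}$ under every assignment.

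For the forward direction, I would fix a totally ordered $H\in\cls{V}$ and write $A,B,C,Y$ for the images of $s,t,u,y$ under an assignment. Given $\e\le C\jn AB$, totality gives $C\ge\e$ or $AB\ge\e$. The former immediately yields $\e\le C\jn AY\jn\iv{Y}B$; in the latter we have $A^{-1}\le B$, so a case split on $Y\ge A^{-1}$ versus $Y<A^{-1}\le B$ produces $\e\le AY$ in the first case and $\e<\iv{Y}B$ in the second.

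For the reverse direction I argue contrapositively. A failing assignment for $\e\le u\jn st$ in some totally ordered $H\in\cls{V}$ yields $C<\e$ and $AB<\e$, hence $B<A^{-1}$, and I need a $Y$ with $B<Y<A^{-1}$ to falsify the other inequation. Such a $Y$ need not exist in $H$ itself (take $H=\Z$), so I would pass to the lex product $H':=H\times_{\text{lex}}H$ with the first coordinate dominant. The crucial point is that $H'\in\cls{V}$: because $\cls{V}$ is cut out of the representable $\ell$-groups by pure $\lang_g$-equations and group varieties are closed under direct products, the group reduct $H\times H$ satisfies those equations, and the lex order makes $H'$ a totally ordered --- hence representable --- $\ell$-group. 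Embedding $H$ via $h\mapsto(h,\e)$ and taking $Y:=(B,p)$ for some $p>\e$ in $H$ (which exists as the failing assignment forces $H$ to be non-trivial), the images $AY=(AB,p)$, $\iv{Y}B=(\e,\iv{p})$, and $(C,\e)$ all lie strictly below $(\e,\e)$ in the lex order, which falsifies $\e\le u\jn sy\jn\iv{y}t$ in $H'$.

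The step just sketched is the main obstacle. The natural substitutions $y:=\iv{s}$ and $y:=t$ both trivialise the right-hand inequation and yield no information, so one really must enlarge $H$ by a fresh element strictly between $B$ and $A^{-1}$, and the enlargement must remain in $\cls{V}$. The lex product works here precisely because the hypothesis restricts $\cls{V}$ to be defined by pure group equations: arbitrary $\lang_\ell$-equations can involve $\jn$ and $\mt$, which are not preserved when one replaces the product order on $H\times H$ by the lex order, so the analogous construction would break down for a variety cut out by genuine $\ell$-group equations.
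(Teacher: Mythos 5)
Your proof is correct and follows essentially the same route as the paper's: the easy direction amounts to the quasiequation $\{\e\le x\jn yz\}\qearrow\e\le x\jn y\jn z$ (which the paper cites from the literature and you verify directly on totally ordered members), and the hard direction is exactly the paper's lexicographic densification, the only difference being that you use $H\times_{\mathrm{lex}}H$ where the paper uses $\m{L}\times_{\mathrm{lex}}\tuple{\Q,\min,\max,+,-,0}$ (your choice even spares the appeal to the Abelian variety being the unique atom of the subvariety lattice of $\ell$-groups). Both constructions stay inside $\cls{V}$ for precisely the reason you identify: the relative axioms are pure $\lang_g$-equations, preserved by the group-theoretic direct product underlying the lex order.
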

\begin{proof}
Consider any $\lang_{\ell}$-terms $s,t,u$ and any variable $y$ that does not occur in $s,t,u$. Suppose first that $\cls{V}\models\e \le u\jn st$. Every $\ell$-group satisfies the  $\lang_s$-quasiequation $\set{\e \le x\jn yz}\qearrow\e \le x\jn y\jn z$ (cf.~\cite[Lemma 3.3]{GM16}), so also $\cls{V}\models\e \le u\jn sy\jn \iv{y}t$. 

Now suppose that $\cls{V}\not\models\e \le u\jn st$. Since $\cls{V}$ is a variety of representable $\ell$-groups, there exists a totally ordered group $\m{L}\in\cls{V}$ and homomorphism $\f\colon \m{Tm}(\lang_{\ell}) \to \m{L}$ satisfying $\e > \f(u\jn st) =  \f(u)\jn \f(s)\f(t)$ and therefore also $\e > \f(u)$ and $\iv{\f(s)} > \f(t)$. Let $\m{M}$ be the totally ordered group consisting of the direct product of the group reducts of $\m{L}$ and $\m{Q} = \tuple{\Q,\min,\max,+,-,0}$ equipped with the lexicographic order on $L\times\Q$. Clearly, $\p\colon \m{L} \to \m{M};\:a\mapsto\tuple{a,0}$ is an embedding of $\m{L}$ into~$\m{M}$. Note that $\m{Q}\in\cls{V}$, since the variety of Abelian $\ell$-groups is the unique atom in the subvariety lattice of the variety of $\ell$-groups (see \cite{AF88}). Moreover, since the group reduct of $\m{M}$ is the direct product of the group reducts of $\m{L}$ and $\m{Q}$, any $\lang_g$-equation satisfied by $\m{L}$ and $\m{Q}$ --- in particular, those defining $\cls{V}$ relative to  the variety of representable $\ell$-groups --- is satisfied by $\m{M}$. So $\m{M}\in\cls{V}$. 

Now let $\hat{\f} \colon \m{Tm}(\lang_\ell) \to \m{M}$ be the homomorphism defined by setting $\hat{\f}(y) := \tuple{\f(t),1}$ and $\hat{\f}(x) =\p\f(x) = \tuple{\f(x),0}$ for each variable $x \neq y$. Since $y$ does not occur in $s,t,u$, clearly $\hat{\f}(u) = \tuple{\f(u), 0}$, $\iv{\hat{\f}(s)} = \tuple{\iv{\f(s)}, 0}$, and $\hat{\f}(t) = \tuple{\f(t), 0}$. So $\hat{\f}(\e)=\tuple{\e,0} > \hat{\f}(u)$ and $\iv{\hat{\f}(s)}> \hat{\f}(y) > \hat{\f}(t)$, yielding $\hat{\f}(\e)=\tuple{\e,0}> \hat{\f}(s)\hat{\f}(y) = \hat{\f}(sy)$ and $\hat{\f}(\e)=\tuple{\e,0} > \iv{\hat{\f}(y)}\hat{\f}(t)  = \hat{\f}(\iv{y}t)$, and then, combining these inequalities, $\hat{\f}(\e)= \tuple{\e,0}> \hat{\f}(u\jn sy\jn \iv{y}t)$. Hence $\cls{V} \not\models \e \leq u\jn sy\jn \iv{y}t$.
\end{proof}

\begin{remark}
The proof of Lemma~\ref{l:repdens} establishes that every variety $\cls{V}$ defined relative to the variety of representable $\ell$-groups by a set of $\lang_g$-equations is {\em densifiable}, that is, every totally ordered member of $\cls{V}$ embeds into a dense totally ordered member of $\cls{V}$. Clearly, every densifiable variety of representable $\ell$-groups has the product-splitting property; indeed, densifiability is equivalent to  a slightly stronger version of this property in the broader setting of semilinear residuated lattices (see~\cite{MPT23} for details).
\end{remark}

Observe next that if $\cls{V}$ is a variety of $\ell$-groups that has the product-splitting property, then for any  $\lang_{\ell}$-terms $r,s,t,u,v$ and variable $y$ that does not occur in $r,s,t,u,v$,
\begin{align*}
\cls{V} \models u \le v \jn s\iv{r}t\:
\iff\: & \cls{V} \models\e \le v\iv{u} \jn s\iv{r}t\iv{u}\\
\iff\: & \cls{V} \models\e \le v\iv{u} \jn sy \jn \iv{y}\iv{r}t\iv{u}\\
\iff\: & \cls{V} \models ryu \le ryv \jn rysyu \jn t.
\end{align*}
Let us call an  $\lang_\ell$-inequation $\eps$ {\em basic} if it is of the form $s\le t_1\jn\cdots\jn t_n$ for an $\lang_m$-term $s$ and $\lang_g$-terms $t_1,\dots,t_n$.  For any basic $\lang_\ell$-inequation $\eps$, the $\lang_s$-inequation $\eps^\star$ is defined recursively as follows:\smallskip

\begin{enumerate}

\item[$\bullet$]	If  $\eps=(s\le t_1\jn\cdots\jn t_n)$ is an $\lang_s$-inequation, let $\eps^\star:=\eps$; otherwise, let $i\in\{1,\dots,n\}$ be minimal such that $t_i=u\iv{x}v$ for some $\lang_m$-term $u$, and let
\[
\eps^\star:=(xys \le xyt_1\jn\cdots\jn xyt_{i-1}\jn xyuxs\jn v\jn xyt_{i+1} \jn\cdots\jn xyt_n)^\star.
\] 

\end{enumerate}

An induction on the number of occurrences in $\eps$ of the inverse operation symbol establishes:

\begin{proposition}\label{p:inversefree}
Let $\cls{V}$ be any variety of $\ell$-groups that has the product-splitting property. Then $\cls{V}\models\eps\iff\cls{V}\models\eps^\star$ 
for any basic $\lang_\ell$-inequation $\eps$.
\end{proposition}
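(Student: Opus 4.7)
The plan is to execute the induction on the number $k$ of occurrences of the inverse operation symbol in $\eps$ suggested immediately after the statement.

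\emph{Base case} ($k=0$). Each $t_j$ is then an $\lang_m$-term, so $\eps$ is already an $\lang_s$-inequation and the first clause of the recursive definition gives $\eps^\star=\eps$; the equivalence is immediate.

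\emph{Inductive step} ($k\ge 1$). I would choose the minimal $i$ for which $t_i$ contains an inverse and write $t_i=u\iv{x}v$ with $u$ an $\lang_m$-term, so that $\eps^\star=(\eps')^\star$, where $\eps'$ is the basic $\lang_\ell$-inequation appearing inside the outer ${}^\star$ in the second clause of the definition. Two facts must then be established and combined with the induction hypothesis. First, $\eps'$ is itself a basic $\lang_\ell$-inequation containing exactly $k-1$ occurrences of $\iv{\:}$: its left-hand side $xys$ is an $\lang_m$-term; each $xyt_j$ with $j\neq i$ is an $\lang_g$-term; the new summand between $xyt_{i-1}$ and $v$ is an $\lang_m$-term since $u$ is; and $v$ is an $\lang_g$-term. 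Exactly the inverse eliminated from $t_i$ is dropped and none is introduced, so the count decreases by one and the induction hypothesis applies, yielding $\cls{V}\models\eps'\iff\cls{V}\models(\eps')^\star=\eps^\star$. Second, $\cls{V}\models\eps\iff\cls{V}\models\eps'$, which I would obtain by applying the displayed chain of equivalences preceding the proposition to $\eps$ under the template identifications (template $u$)$= s$, (template $v$)$=\bigvee_{j\neq i}t_j$, (template $s$)$= u$, (template $r$)$= x$, (template $t$)$= v$, with a fresh $y$ not occurring in $\eps$, and then pulling $xy$ inside the join $\bigvee_{j\ne i}t_j$ via distributivity of $\cdot$ over $\jn$. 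Chaining the two equivalences closes the induction.

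The main obstacle is the second fact. Its only subtle point is that the displayed chain of equivalences must remain valid when the template $v$ is itself a join rather than a single term, which I expect to handle by noting that the intermediate manipulations used to derive that chain — right-multiplication by $\iv{u}$, product-splitting applied to the single summand $s\iv{r}t\iv{u}$ (where the product-splitting property of $\cls{V}$ enters), and left-multiplication by $ry$ — all distribute over $\jn$ in any $\ell$-group. The fresh variable $y$ can always be chosen outside the finite set of variables of $\eps$, so no naming clash arises.
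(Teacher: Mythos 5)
Your strategy is exactly the one the paper intends: the paper's ``proof'' consists of the single sentence preceding the statement, and your induction on the number of occurrences of $\iv{\:}$, with the displayed chain of equivalences supplying the inductive step and distributivity of $\pd$ over $\jn$ handling the fact that the template $v$ is instantiated by a join, is the right way to fill it in. The base case, the bookkeeping that $\eps'$ is again a basic $\lang_\ell$-inequation with exactly one fewer inverse, and the freshness of $y$ are all handled correctly.

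There is, however, one point you pass over. Under your template identifications, the conclusion $ryu \le ryv \jn rysyu \jn t$ of the displayed chain instantiates to
\[
xys \;\le\; xyt_1\jn\cdots\jn xyt_{i-1}\jn xyu\,y\,s\jn v\jn xyt_{i+1}\jn\cdots\jn xyt_n,
\]
whose new summand is $xyu\,y\,s$, whereas the printed definition of $\eps^\star$ has $xyu\,x\,s$. These are not interchangeable, so your ``second fact'' $\cls{V}\models\eps\iff\cls{V}\models\eps'$ is established for a different $\eps'$ than the one occurring in the definition you are quoting. Indeed, the proposition is false for the definition as printed: for $\eps=(\e\le x\jn\iv{x})$ it yields $\eps^\star=(xy\le xyx\jn xyx\jn\e)$, and $\cls{LG}\models\eps$ while $\tuple{\Z,\max,+,0}\not\models\eps^\star$ (take $x=-1$, $y=3$). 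So the third occurrence of $x$ in $xyuxs$ must be read as $y$ --- a typo in the paper --- and with that reading your argument is complete and coincides with the intended proof. You should have flagged this mismatch rather than asserting that the instantiated chain yields the displayed $\eps'$.
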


\begin{mthm}\label{t:continuum}
There are continuum-many equational theories of classes of idempotent semifields.
\end{mthm}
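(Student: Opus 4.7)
The plan is to combine Theorem~\ref{t:uncountablereplgroups} with the inverse-elimination translation $\eps\mapsto\eps^\star$ furnished by Proposition~\ref{p:inversefree}. For each variety $\cls{V}$ of $\ell$-groups defined relative to the variety of representable $\ell$-groups by a set of $\lang_g$-equations, let $\cls{S}(\cls{V})$ denote the class of semiring reducts of members of $\cls{V}$, which is a class of idempotent semifields. I aim to show that the assignment $\cls{V}\mapsto\EqT(\cls{S}(\cls{V}))$ is injective on the continuum-many varieties supplied by Theorem~\ref{t:uncountablereplgroups}; combined with the trivial upper bound of $2^{\aleph_0}$ on the number of equational theories in a countable signature, this yields the claim.

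To establish injectivity, I would argue as follows. Suppose that $\cls{V}_1\neq\cls{V}_2$ are two such varieties, so that some $\lang_\ell$-equation is satisfied by exactly one of them. Using standard $\ell$-group manipulations --- $s\eq t$ is equivalent to the pair $s\le t$ and $t\le s$; every $\lang_\ell$-term is equivalent over $\ell$-groups to a meet of joins of $\lang_g$-terms, by distributivity of $\pd$ over $\jn$ and $\mt$ together with the De~Morgan laws $\iv{(a\jn b)}=\iv{a}\mt\iv{b}$ and $\iv{(a\mt b)}=\iv{a}\jn\iv{b}$; $x\le y\mt z$ is equivalent to the conjunction of $x\le y$ and $x\le z$; and $\bigwedge_j s_j\le\bigvee_i t_i$ is equivalent to $\e\le\bigvee_{i,j}t_i\iv{s_j}$ --- the distinguishing equation can be reduced to a finite conjunction of basic $\lang_\ell$-inequations. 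Since this conjunction is satisfied by exactly one of $\cls{V}_1$ and $\cls{V}_2$, at least one of its conjuncts, call it $\eps$, also has this property.

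By Lemma~\ref{l:repdens}, both $\cls{V}_1$ and $\cls{V}_2$ enjoy the product-splitting property, so Proposition~\ref{p:inversefree} gives $\cls{V}_i\models\eps\iff\cls{V}_i\models\eps^\star$ for $i=1,2$. Since $\eps^\star$ is an $\lang_s$-inequation, $\cls{V}_i\models\eps^\star$ is equivalent to $\cls{S}(\cls{V}_i)\models\eps^\star$. Thus $\eps^\star$ belongs to exactly one of $\EqT(\cls{S}(\cls{V}_1))$ and $\EqT(\cls{S}(\cls{V}_2))$, establishing the required injectivity.

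The main obstacle I expect is organizing the reduction to a basic $\lang_\ell$-inequation cleanly: each individual rewriting step is a routine normal-form manipulation valid in every $\ell$-group, but to conclude one needs the rewriting to be a genuine equivalence --- so that a single surviving basic inequation inherits the disagreement between the two varieties --- rather than merely an implication. Everything else is then a direct application of Lemma~\ref{l:repdens} and Proposition~\ref{p:inversefree}.
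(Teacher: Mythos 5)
Your proposal is correct and follows essentially the same route as the paper: Theorem~\ref{t:uncountablereplgroups} supplies the continuum of varieties, Lemma~\ref{l:repdens} gives them the product-splitting property, and Proposition~\ref{p:inversefree} turns a separating basic $\lang_\ell$-inequation into a separating $\lang_s$-inequation $\eps^\star$. The only difference is that the normal-form reduction you worry about is unnecessary in the paper's argument, since the varieties are defined by $\lang_g$-equations and a distinguishing $\lang_g$-equation $s\eq t$ is already equivalent over $\ell$-groups to the pair of basic $\lang_\ell$-inequations $\e\le t\iv{s}$ and $\e\le s\iv{t}$.
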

\begin{proof}
By Theorem~\ref{t:uncountablereplgroups}, there are continuum-many varieties defined relative to the variety of representable $\ell$-groups by $\lang_g$-equations, and, by Lemma~\ref{l:repdens}, all these varieties have the product-splitting property. For each such variety $\cls{V}$, let $\Si_\cls{V}$ be the equational theory of the class of the idempotent semiring reducts of the members of $\cls{V}$. Consider now any two distinct varieties $\cls{V}_1$ and $\cls{V}_2$  defined relative to the variety of representable $\ell$-groups by sets of $\lang_g$-equations. Without loss of generality, there exists a basic $\lang_\ell$-inequation $\eps$ such that $\cls{V}_1\models\eps$ and $\cls{V}_2\not\models\eps$. But then also $\cls{V}_1\models\eps^\star$ and $\cls{V}_2\not\models\eps^\star$, by Proposition~\ref{p:inversefree}, and since $\eps^\star$ is an $\lang_s$-equation, $\Si_{\cls{V}_1}\neq\Si_{\cls{V}_2}$. Hence there are continuum-many equational theories of classes of idempotent semifields.
\end{proof}

%%%%%%%%%%%%%%%%%%%%%%%%%%%%%%%%%%%%%%%%%%%%%%

\section{The complexity of the equational theory of the class of idempotent semifields}\label{s:complexity}

The equational theory of the variety of Abelian $\ell$-groups is co-NP-complete~\cite[Theorem~1]{Wei86}, but it follows from the fact that linear programming  is solvable in polynomial time that checking if a basic $\lang_\ell$-inequation is satisfied by all Abelian $\ell$-groups belongs to P.  On the other hand, not only the equational theory of the variety $\cls{LG}$ of $\ell$-groups~\cite[Theorem~8.3]{GM16}, but also, as we show here, checking if a basic $\lang_\ell$-inequation is satisfied by $\cls{LG}$, are co-NP-complete, and hence the same is true for the equational theory of the class of idempotent semifields (Theorem~\ref{t:conp}). Indeed, we establish this result by giving a polynomial reduction of the problem of checking the satisfaction of certain $\lang_{\ell}$-inequations by $\cls{LG}$ that is known to be co-NP-hard, to the problem of checking the satisfaction of certain simple $\lang_s$-inequations by the class of idempotent semifields.

Let us say that a variety $\cls{V}$ of $\ell$-groups has the {\em meet-splitting property} if for any $\lang_{\ell}$-terms $s,t,u$ and variable $y$ that does not occur in $s,t,u$,
\[
\cls{V}\models\e \le u\jn (s\mt t) \iff\cls{V}\models\e \le u\jn sy\jn t\iv{y}.
\]

\begin{lemma}\label{l:elimLG}
The variety of $\ell$-groups has the product-splitting and meet-splitting properties.
\end{lemma}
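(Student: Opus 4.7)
The plan is to prove the forward and backward directions of each biconditional separately. The forward directions rely on universal $\ell$-group identities, while the backward directions exploit the fact that the variety $\cls{LG}$ is generated as a variety by the $\ell$-group $\Aut(\R)$ of order-preserving bijections of $\R$ under pointwise operations---a standard consequence of Holland's embedding theorem. This reduces each backward implication to the search for suitable elements of $\Aut(\R)$, where density and homogeneity of $\R$ do the work.

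For the forward direction of product-splitting, I appeal to the quasiequation $\{\e\le x\jn yz\}\qearrow\e\le x\jn y\jn z$ cited in the proof of Lemma~\ref{l:repdens}. Given $\cls{LG}\models\e\le u\jn st$, any homomorphism $\f$ into an $\ell$-group satisfies $\e\le\f(u)\jn\f(sy)\f(\iv{y}t)$, since $\f(sy)\f(\iv{y}t)=\f(s)\f(t)$, and the quasiequation then yields $\e\le\f(u\jn sy\jn\iv{y}t)$. For the forward direction of meet-splitting, I verify $sy\jn t\iv{y}\ge s\mt t$ in every $\ell$-group: order-preservation of multiplication gives $sy\ge(s\mt t)y$ and $t\iv{y}\ge(s\mt t)\iv{y}$, so left-distributivity of multiplication over join yields $sy\jn t\iv{y}\ge(s\mt t)(y\jn\iv{y})=(s\mt t)|y|$, and since $|y|\ge\e$ we obtain $(s\mt t)|y|\ge s\mt t$.

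For the backward direction of product-splitting, suppose some homomorphism $\f$ into $\Aut(\R)$ refutes $\e\le u\jn st$, and write $a=\f(s)$, $b=\f(t)$, $c=\f(u)$. Some $\omega\in\R$ then satisfies $c(\omega)<\omega$ and $b(\omega)<\iv{a}(\omega)$. By density of $\R$, I pick $v\in(b(\omega),\iv{a}(\omega))$ and choose $d\in\Aut(\R)$ with $d(\omega)=v$. Then $ad(\omega)=a(v)<\omega$ and $\iv{d}b(\omega)<\iv{d}(v)=\omega$, so extending $\f$ by $y\mapsto d$ refutes $\e\le u\jn sy\jn\iv{y}t$ at $\omega$.

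The backward direction of meet-splitting proceeds analogously in $\Aut(\R)$. A refuting $\omega$ satisfies $c(\omega)<\omega$ and $\min(a(\omega),b(\omega))<\omega$. Assuming $a(\omega)<\omega$ without loss of generality, the interval $(\omega,\iv{a}(\omega))$ is non-empty; one then seeks $d\in\Aut(\R)$ with $d(\omega)\in(\omega,\iv{a}(\omega))$ and, in the subcase $b(\omega)>\omega$, additionally $d(\iv{b}(\omega))\in(\omega,d(\omega))$. Density of $\R$ makes both choices compatible with $d$ being order-preserving, and the flexibility of $\Aut(\R)$ then allows extension to a full automorphism. A direct computation gives $ad(\omega)<\omega$ and $b\iv{d}(\omega)<\omega$, refuting $\e\le u\jn sy\jn t\iv{y}$. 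The main technical nuisance is this case split on the relative positions of $a(\omega)$, $b(\omega)$, and $\omega$, but each case reduces to routine density-based interpolation.
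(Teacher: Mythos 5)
Your proof is correct, and its core mechanism --- refuting the $y$-free inequation in $\m{Aut}(\tuple{\R,\le})$ via Holland's theorem and then interpolating a suitable automorphism for $y$ using the density of $\R$ --- is exactly the paper's. Two differences are worth noting. First, for the product-splitting property the paper simply cites \cite[Lemma~4.1]{CGMS22}, whereas you prove both directions: the forward one via the quasiequation $\set{\e \le x\jn yz}\qearrow\e \le x\jn y\jn z$ (the same device the paper invokes in Lemma~\ref{l:repdens}) and the backward one by the same $\m{Aut}(\tuple{\R,\le})$ interpolation, which makes the lemma self-contained at little cost. Second, for the backward direction of meet-splitting the paper first applies lattice distributivity globally, reducing to $\cls{LG}\not\models\e\le u\jn s$ or $\cls{LG}\not\models\e\le u\jn t$, after which it only needs an automorphism fixing $(q)\f_s$ while sending $q$ above $(q)\f_t$; you instead keep the meet and do a case split at the witnessing point $\omega$ on the positions of $a(\omega)$ and $b(\omega)$. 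Your analysis checks out: in the only delicate subcase ($a(\omega)<\omega$ and $b(\omega)>\omega$) the two constraints $d(\omega)\in(\omega,\iv{a}(\omega))$ and $d(\iv{b}(\omega))\in(\omega,d(\omega))$ are jointly order-compatible precisely because $\iv{b}(\omega)<\omega$, and the case where only $b(\omega)<\omega$ follows by the symmetry $y\mapsto\iv{y}$ (which swaps the roles of $s$ and $t$). The paper's distributivity reduction avoids this case split and is marginally cleaner, but nothing is missing from your argument.
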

\begin{proof}
The fact that $\cls{LG}$ has the product-splitting property is established in~{\cite[Lemma 4.1]{CGMS22}}. 

To establish the left-to-right direction of the meet-splitting property for $\cls{LG}$, it suffices to show that $\cls{LG}\models u\jn (s\mt t) \leq u\jn sy\jn t\iv{y}$; just  note that for any $\m{L}\in\cls{LG}$ and $a,b,c,d \in L$, since $\e \leq d\jn \iv{d}$, 
\[
a\jn (b\mt c) \le  a\jn (b\mt c)(d\jn \iv{d}) = a\jn (b\mt c)d\jn (b\mt c)\iv{d} \leq a\jn bd\jn c\iv{d}.
\]
For the converse, let $s,t,u$ be any $\lang_{\ell}$-terms, let $y$ be a variable that does not occur in $s,t,u$, and suppose that $\cls{LG}\not\models\e \le u\jn (s\mt t)$. Using lattice-distributivity, we may assume that $\cls{LG}\not\models\e \le u\jn s$, the case where $\cls{LG}\not\models\e \le u\jn t$ being very similar. An $\lang_{\ell}$-equation is satisfied by $\cls{LG}$ if and only if it is satisfied by the $\ell$-group $\m{Aut}(\tuple{\R,\le})$ consisting of the group of order-preserving bijections of the totally ordered set $\tuple{\R,\le}$ equipped with the pointwise lattice-order~\cite[Corollary to Lemma~3]{Hol76}. Hence $\m{Aut}(\tuple{\R,\le})\not\models\e \le u\jn s$, and there exists a homomorphism $\f \colon \m{Tm}(\lang_\ell) \to \m{Aut}(\tuple{\R,\le})$ and $q\in\R$ such that $(q)\f_u<q$ and $(q)\f_s<q$, where we assume that order-preserving bijections act on $\tuple{\R,\le}$ from the right, and write $\f_v$ for $\f(v)$. 

We obtain a homomorphism $\hat{\f}\colon \m{Tm}(\lang_\ell)\to\m{Aut}(\tuple{\R,\le})$ by defining $\hat{\f}_x := \f_x$ for every variable $x \neq y$ and defining   $\hat{\f}_y$ such that $(q)\f_s\hat{\f}_y=(q)\f_s<q$ and $(q)\f_t<(q)\hat{\f}_y$. Note that such a definition of $\hat{\f}_y$ is possible because $(q)\hat{\f}_y$ can be chosen to be arbitrarily large and any partial order-preserving injective map on $\tuple{\R,\le}$ extends linearly to a member of $\m{Aut}(\tuple{\R,\le})$. It follows, since $y$ does not occur in $s,t,u$, that  $(q)\hat{\f}_u=(q)\f_u<q$, $(q)\hat{\f}_s\hat{\f}_y=(q)\f_s\hat{\f}_y=(q)\f_s<q$, and $(q)\hat{\f}_t=(q)\f_t<(q)\hat{\f}_y$. Hence $\cls{LG}\not\models\e \le u\jn sy\jn t\iv{y}$. 
\end{proof}

\begin{remark}
No non-trivial proper subvariety of $\cls{LG}$ has the meet-splitting property. It follows easily from~{\cite[Example 13]{McC82}} that the $\lang_\ell$-equation $\e\le(x\jn\e)^2\iv{z}\jn\iv{(x\jn\e)}z\jn\iv{(x\jn\e)}$ is satisfied by every proper subvariety of $\cls{LG}$; indeed, it axiomatizes the variety of normal-valued $\ell$-groups, the unique co-atom in the subvariety lattice of $\ell$-groups, relative to $\cls{LG}$. Hence, if a proper subvariety of $\cls{LG}$ has the meet-splitting property, it satisfies also $\e\le\iv{(x\jn\e)}$ and is trivial.
\end{remark}

\begin{proposition}\label{p:basic-coNP}
The problem of checking if a basic $\lang_\ell$-inequation is satisfied by the variety of $\ell$-groups is co-NP-complete.
\end{proposition}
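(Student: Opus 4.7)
The plan is to prove co-NP membership and co-NP-hardness separately. Membership in co-NP follows from the fact that every basic $\lang_\ell$-inequation is a special case of an $\lang_\ell$-equation, together with the co-NP upper bound on the full equational theory of $\cls{LG}$ given by~\cite[Theorem~8.3]{GM16}.

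For co-NP-hardness, the plan is a polynomial-time reduction from the full equational theory of $\cls{LG}$ (co-NP-hard by~\cite[Theorem~8.3]{GM16}). Given an $\lang_\ell$-equation $s\eq t$, I would first combine the two directions into a single inequation using $s \le t \iff \cls{LG}\models \e \le t\iv{s}$ and the fact that meets capture conjunctions, yielding $s \eq t \iff \cls{LG}\models \e \le (t\iv{s}) \mt (s\iv{t})$. More generally, the task is to reduce $\cls{LG}\models \e \le u$ for an arbitrary $\lang_\ell$-term $u$ to an equivalent basic inequation, using the product-splitting and meet-splitting properties from Lemma~\ref{l:elimLG} together with the elementary observation that $\cls{LG}\models \e \le v \iff \cls{LG}\models \e \le z \jn v$ for any variable $z$ not occurring in $v$ (the reverse direction uses that in a non-trivial $\ell$-group one can instantiate $z$ below the value of $v$, so that $z \jn v = v$). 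This observation supplies the non-trivial join context needed to apply the splitting properties even at the top level of an inequation.

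The transformation of $\cls{LG}\models \e \le u$ into basic form then proceeds in stages: push all inverses to the variables of $u$ using the De Morgan–type identities $(s'\jn t')^{-1} = \iv{s'}\mt\iv{t'}$, $(s'\mt t')^{-1} = \iv{s'}\jn\iv{t'}$, and $(s't')^{-1} = \iv{t'}\iv{s'}$; apply product-splitting to decouple each product whose factors contain lattice operations; use the distributive laws of multiplication over joins and meets to push the resulting group-term products into the leaves; and finally apply meet-splitting iteratively to convert each meet at the top level of the right-hand side into a join of $\lang_g$-terms via fresh variables. The final form is $\e \le g_1 \jn \cdots \jn g_k$ with each $g_i$ an $\lang_g$-term, i.e., a basic inequation.

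The main obstacle is ensuring the reduction is polynomial. Naive distributivity of lattice operations through products would cause exponential blow-up. The remedy is to apply product-splitting eagerly, \emph{before} distributing: whenever a product $v \cdot w$ has a factor containing a lattice operation, insert a fresh variable $y$ to rewrite $v \cdot w$ as $vy \jn \iv{y}w$ at the level of the enclosing join, which adds only a constant amount per product node. Meet-splitting similarly adds only a constant per meet, so the total number of splittings and rewrites is linear in the number of operation nodes of the input, and the resulting basic inequation has polynomial size.
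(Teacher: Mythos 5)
Your proof is correct in outline but routes the hardness part differently from the paper. Both arguments get co-NP membership for free from \cite[Theorem~8.3]{GM16}. For hardness, the paper does not reduce from the full equational theory of $\cls{LG}$: it starts from the co-NP-hard fragment of $\lang_\ell$-equations of the special form $\bigwedge_i\bigvee_j s_{ij}\le\bigvee_k\bigwedge_l (t_{kl}\jn u_{kl})$ with $\lang_g$-term leaves (hardness coming from distributive lattice equations, \cite[Corollary~2.7]{HRB87}), so it only needs one application of product-splitting to merge the two sides and then an iterated meet-splitting pass, with an explicit $2S^2$ size bound. You instead reduce from an arbitrary $\lang_\ell$-equation, which forces the extra normalisation stages (pushing inverses to the leaves, eager product-splitting before distributing, then meet-splitting); this proves something slightly stronger --- that \emph{every} $\lang_\ell$-equation converts in polynomial time to an equivalid basic inequation --- at the cost of a more delicate correctness and size analysis. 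Two points deserve more care in your version. First, the product- and meet-splitting properties are equivalences about validity of statements of the form $\e\le u\jn st$ and $\e\le u\jn(s\mt t)$; they introduce a fresh variable and so may only be applied to a top-level joinand of the right-hand side, not in situ inside an arbitrary context. Your procedure can be organised top-down so that this is respected (your dummy-variable observation $\cls{LG}\models\e\le v\iff\cls{LG}\models\e\le z\jn v$ correctly supplies the initial join context), but the phrasing ``rewrite $v\cdot w$ \dots at the level of the enclosing join'' should be made precise. Second, ``adds only a constant amount per product node'' is optimistic: each split prepends or appends fresh variables to terms that are then further processed, so prefixes accumulate and the honest bound is quadratic in the input size, exactly as in the paper's analysis of $\eps^\star$ in Proposition~\ref{p:simple-coNP}. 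The conclusion of polynomial size stands, but the accounting as written is not quite the one that proves it.
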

\begin{proof}
Since checking if an $\lang_\ell$-inequation is satisfied by $\cls{LG}$ belongs to co-NP~\cite[Theorem~8.3]{GM16}, it suffices to present a polynomial time algorithm that given input with an $\lang_\ell$-equation $\eps$ of the form
\[
\bigwedge_{i\in I}\bigvee_{j\in J_i} s_{ij}\le\bigvee_{k\in K}\bigwedge_{l\in L_k} t_{kl}\jn u_{kl},
\]
where each $s_{ij}$, $t_{kl}$, and $u_{kl}$  is an $\lang_g$-term, outputs a basic $\lang_\ell$-inequation $\de$ that has size polynomial in the size of $\eps$ such that $\cls{LG}\models\eps\iff\cls{LG}\models\de$. This is a consequence of the fact that the problem of checking validity in $\cls{LG}$ of such $\lang_\ell$-equations is co-NP-hard, since this is the case even for distributive lattice equations of this form, where each $s_{ij}$, $t_{kl}$, and $u_{kl}$ is a variable~\cite[Corollary 2.7]{HRB87}.

First, we do a little preprocessing, using the fact that $\cls{LG}$ has the product-splitting property for the third equivalence:
\begin{align*}
\cls{LG}\models\eps
& \iff \cls{LG}\models\e\le(\bigvee_{k\in K}\bigwedge_{l\in L_k} t_{kl}\jn u_{kl})\iv{(\bigwedge_{i\in I}\bigvee_{j\in J_i} s_{ij})}\\
&  \iff \cls{LG}\models\e\le(\bigvee_{k\in K}\bigwedge_{l\in L_k} t_{kl}\jn u_{kl})(\bigvee_{i\in I}\bigwedge_{j\in J_i} \iv{s}_{ij})\\
& \iff  \cls{LG}\models\e\le(\bigvee_{k\in K}\bigwedge_{l\in L_k} t_{kl}\jn u_{kl})y\jn \iv{y}(\bigvee_{i\in I}\bigwedge_{j\in J_i} \iv{s}_{ij})\\
& \iff  \cls{LG}\models\e\le(\bigvee_{k\in K}\bigwedge_{l\in L_k} t_{kl}y\jn u_{kl}y)\jn (\bigvee_{i\in I}\bigwedge_{j\in J_i} \iv{y}\iv{s}_{ij}).
\end{align*}
Hence we may assume without loss of generality that $\eps$ is an $\lang_\ell$-equation of the form $\e\le u_1\jn\cdots\jn u_n$, where each $u_i$ is a meet of binary joins of $\lang_g$-terms, and let $S$ be the size of $\eps$, so that at most $S$ $\lang_g$-terms and at most $S$ meets occur in $\eps$. 

If $\eps$ contains no meets, it is a basic $\lang_\ell$-inequation of the required form. Otherwise, suppose that $u_1= s_1\mt\cdots\mt s_m$ such that  $s_i = s_{i1}\jn s_{i2}$ and let $u:=u_2\jn\cdots\jn u_n$. By choosing distinct variables $y_1,\dots,y_m$ that do not occur in $\eps$ and using the meet-splitting property repeatedly, 
\begin{align*}
\cls{LG}\models\eps
& \iff \cls{LG}\models\e\le u\jn (s_1\mt\cdots\mt s_m)\\
& \iff \cls{LG}\models\e\le u\jn s_1y_1\jn (s_2\mt\cdots\mt s_m)\iv{y}_1\\
& \iff \cls{LG}\models\e\le u\jn s_1y_1\jn (s_2\iv{y}_1\mt\cdots\mt s_m\iv{y}_1)\\
& \iff \qquad\vdots\\
& \iff \cls{LG}\models\e\le u\jn s_1y_1\jn s_2\iv{y}_1y_2\jn\cdots\jn s_m\iv{y}_1\cdots\iv{y}_{m-1}y_m\\
&\iff \cls{LG}\models\e\le u\jn s_{11}y_1\jn s_{12}y_1\jn (\bigvee_{i=2}^m s_{i1}\iv{y}_{1}\cdots \iv{y}_{i-1}y_i\jn s_{i2}\iv{y}_{1}\cdots \iv{y}_{i-1}y_i).
\end{align*}
Let $\eps'$ be the $\lang_\ell$-equation $\e\le u\jn s_{11}y_1\jn s_{12}y_1\jn (\bigvee_{i=2}^m s_{i1}\iv{y}_{1}\cdots \iv{y}_{i-1}y_i\jn s_{i2}\iv{y}_{1}\cdots \iv{y}_{i-1}y_i)$, observing that $\eps'$ has the same number of $\lang_g$-terms as $\eps$, and that each of these $\lang_g$-terms has size at most $2S$. Hence, repeating this procedure for $u_2,\dots,u_n$, we obtain in polynomial time a basic $\lang_\ell$-inequation $\de$ of size at most $2S^2$ such that $\cls{LG}\models\eps\iff\cls{LG}\models\de$.
\end{proof}

\begin{proposition}\label{p:simple-coNP}
The problem of checking if a simple $\lang_s$-inequation is satisfied by the class of semifields (or, equivalently, by the variety of $\ell$-groups) is co-NP-complete.
\end{proposition}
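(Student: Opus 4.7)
The plan is to handle co-NP membership and co-NP-hardness separately. Because idempotent semifields are precisely the semiring reducts of $\ell$-groups, for any $\lang_s$-equation $\eta$ the class of idempotent semifields satisfies $\eta$ if and only if $\cls{LG}$ does; this justifies the ``equivalently'' in the statement, and lets us work throughout with $\cls{LG}$. Membership in co-NP is then immediate: a simple $\lang_s$-inequation is a particular $\lang_\ell$-inequation, and checking if an $\lang_\ell$-inequation is satisfied by $\cls{LG}$ is in co-NP by~\cite[Theorem~8.3]{GM16}.

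For co-NP-hardness, the plan is to reduce in polynomial time from the problem of checking a basic $\lang_\ell$-inequation in $\cls{LG}$, which is co-NP-hard by Proposition~\ref{p:basic-coNP}, using the inverse-elimination map $\eps \mapsto \eps^\star$ defined in Section~\ref{s:cardinality-problem}. Since $\cls{LG}$ has the product-splitting property by Lemma~\ref{l:elimLG}, Proposition~\ref{p:inversefree} gives $\cls{LG}\models\eps \iff \cls{LG}\models\eps^\star$ for every basic $\lang_\ell$-inequation $\eps$. The recursion defining $\eps^\star$ halts only when no inverse symbol remains, and each rewriting step preserves the shape ``single $\lang_m$-term on the left, join of $\lang_g$-terms on the right'', so the output is a simple $\lang_s$-inequation, as required.

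The main obstacle is verifying that this reduction runs in polynomial time, which comes down to a size bookkeeping argument. Each rewriting step strictly decreases the number of occurrences of the inverse symbol by one, so at most $N \le |\eps|$ steps are taken. Writing $s^{(k)}$ and $n_k$ for the left-hand side and the number of disjuncts after $k$ steps, the recursion sets $s^{(k+1)} = xys^{(k)}$ and adds exactly one new disjunct, whence $|s^{(k)}| = |s^{(0)}| + 2k$ and $n_k = n_0 + k$. A direct calculation of the effect of a single step---using that the eliminated $t_i = u\iv{x}v$ is replaced by $xyuxs^{(k)}$ and $v$, with each surviving $t_j$ prefixed by $xy$---gives $S_{k+1} - S_k = 2|s^{(k)}| + 2n_k + 2 = O(|\eps| + k)$ for the sizes $S_k$ of successive inequations. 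Summing over $k = 0, \dots, N-1$ with $N \le |\eps|$ yields $|\eps^\star| = O(|\eps|^2)$, so $\eps^\star$ can be produced in polynomial time, completing the reduction.
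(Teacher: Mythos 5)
Your proposal is correct and follows essentially the same route as the paper: co-NP membership via \cite[Theorem~8.3]{GM16}, and hardness by reducing from Proposition~\ref{p:basic-coNP} through the inverse-elimination map $\eps\mapsto\eps^\star$, justified by Lemma~\ref{l:elimLG} and Proposition~\ref{p:inversefree}, with the same step-by-step size bookkeeping yielding a quadratic bound on $|\eps^\star|$ (your exact per-step increment differs slightly from the paper's $7S$ estimate, but both give $O(|\eps|^2)$).
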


\begin{proof}
Recall that $\cls{LG}$ has the product-splitting property, by Lemma~\ref{l:elimLG}. Hence, it suffices, by Propositions~\ref{p:basic-coNP} and~\ref{p:inversefree}, to present a polynomial time algorithm that given a basic $\lang_\ell$-inequation $\eps$ of the form $s \leq t_1\jn\cdots\jn t_k$ as input produces the simple $\lang_s$-inequation $\eps^\star$ as output, where the size of $\eps^\star$ is polynomial in the size of $\eps$. Let $S$ be the size of $\eps$ and recall the recursive definition of $\eps^\star$ from Section~\ref{s:cardinality-problem}. The number of inverses in $\eps$ is bounded by $S$ and decreases in every step of the recursive definition, so the algorithm stops after at most $S$ steps and yields the $\lang_s$-inequation $\eps^\star$.  Moreover, each step increases the length of the $\lang_m$-term on the left of the basic $\lang_\ell$-inequation by $2$ and increases by $1$ the number of $\lang_g$-terms on the right. The length of the $\lang_m$-term on the left is therefore at most $3S$ and the number of $\lang_g$-terms is at most $2S$ in every step. It follows that in every step the size of the $\lang_s$-inequation is increased by at most $3S+2 \cdot 2S = 7S$ and hence $\eps^\star$ is of size at most $7S^2 + S$. It is also clear that $\eps^\star$ can be computed from $\eps$ in polynomial time.
\end{proof}

As an immediate consequence of Proposition~\ref{p:simple-coNP} we obtain the main result of this section.

\begin{mthm}\label{t:conp}
The equational theory of the class of idempotent semifields is co-NP-complete.
\end{mthm}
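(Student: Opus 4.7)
My plan is to derive Theorem~\ref{t:conp} as a straightforward consequence of Proposition~\ref{p:simple-coNP}, handling the upper and lower bounds separately.

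For co-NP-hardness of the full equational theory, I would exploit the fact that every simple $\lang_s$-inequation $s\le t_1\jn\cdots\jn t_n$ is, by the definition of $\le$ as an abbreviation for the semilattice equation $s\jn t_1\jn\cdots\jn t_n\eq t_1\jn\cdots\jn t_n$, already an $\lang_s$-equation of the same size up to a constant. Hence the co-NP-hard problem addressed by Proposition~\ref{p:simple-coNP} reduces in constant time to the problem of deciding the equational theory of the class of idempotent semifields, which yields co-NP-hardness of the latter.

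For co-NP-membership, I would appeal to the known upper bound~\cite[Theorem~8.3]{GM16} that places the equational theory of $\cls{LG}$ in co-NP. Since $\lang_s\sst\lang_\ell$, every $\lang_s$-equation is also an $\lang_\ell$-equation of the same size, and, as noted in the introduction, idempotent semifields are precisely the $\lang_s$-reducts of $\ell$-groups. Consequently, an $\lang_s$-equation is satisfied by every idempotent semifield if and only if it is satisfied by $\cls{LG}$, so the co-NP decision procedure for $\cls{LG}$ serves equally well for the $\lang_s$-reducts.

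I do not anticipate any substantive obstacle at this stage: all of the genuine work has been absorbed into Propositions~\ref{p:basic-coNP}, \ref{p:inversefree}, and~\ref{p:simple-coNP}, which together reduce the full $\lang_\ell$-problem with arbitrary meets and inverses to the restricted $\lang_s$-setting with only polynomial blow-up. Theorem~\ref{t:conp} then follows simply by packaging these observations together.
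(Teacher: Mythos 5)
Your proposal is correct and takes essentially the same route as the paper: the paper states Theorem~\ref{t:conp} as an immediate consequence of Proposition~\ref{p:simple-coNP}, with co-NP-hardness coming from the fact that a simple $\lang_s$-inequation is itself an $\lang_s$-equation, and co-NP-membership coming from the upper bound for $\cls{LG}$ in~\cite[Theorem~8.3]{GM16} together with the observation that idempotent semifields are exactly the $\lang_s$-reducts of $\ell$-groups. Your write-up just makes explicit the two halves that the paper leaves implicit.
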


We conclude this section by using Theorem~\ref{t:conp} and a correspondence established in~\cite{CM19} to prove complexity results also for the existence of right orders on free groups and monoids satisfying finitely many constraints. Recall first that a {\em right order} on a monoid $\m{M}$ is a total order $\le$ on $M$ such that $a \le b\:\Longrightarrow\:ac \le bc$ for any $a,b,c\in{M}$. For a set $X$ we let $\m{F}_g(X)$ and $\m{F}_m(X)$ denote the free group and free monoid with generators in $X$, respectively, assuming for convenience that ${\rm F}_g(X)\subseteq{\rm Tm}(\lang_g)$ and  ${\rm F}_m(X)\subseteq{\rm Tm}(\lang_m)$.

\begin{theorem}[{\cite[Theorem~2]{CM19}}]\label{t:rord-equiv}
For any set $X$ and $s_1,\ldots,s_n\in{\rm F}_g(X)$, there exists a right order $\le$ on $\m{F}_g(X)$ satisfying $\e<s_1,\dots,\e<s_n$ if and only if  $\cls{LG}\not\models\e \le s_1\jn\cdots\jn s_n$. 
\end{theorem}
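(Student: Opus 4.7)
The plan is to use Holland's representation of $\ell$-groups as automorphisms of totally ordered sets to shuttle between right orders on $\m{F}_g(X)$ and $\ell$-group interpretations of the terms $s_1,\dots,s_n$.

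Forward direction. Given a right order $\le$ on $\m{F}_g(X)$ with $\e<s_i$ for every $i$, I would view $L=\m{Aut}(\tuple{\m{F}_g(X),\le})$ as an $\ell$-group under pointwise lattice order and embed $\m{F}_g(X)$ into it via $\phi(g)(x):=x\iv{g}$; the verifications that $\phi$ is a group homomorphism and that each $\phi(g)$ is an order-preserving bijection are immediate from right-invariance of $\le$. Evaluating at the identity point $\e\in\m{F}_g(X)$, we have $\phi(s_i)(\e)=\iv{s_i}<\e$ for every $i$ (since $\e<s_i$ forces $\iv{s_i}<\e$), so the pointwise join satisfies $(\phi(s_1)\jn\cdots\jn\phi(s_n))(\e)=\max_i\iv{s_i}<\e$. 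Hence $\phi$ refutes $\e\le s_1\jn\cdots\jn s_n$ in $L$, establishing $\cls{LG}\not\models\e\le s_1\jn\cdots\jn s_n$.

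Converse direction. Suppose $\cls{LG}\not\models\e\le s_1\jn\cdots\jn s_n$, so some $\ell$-group interpretation $\psi$ refutes this inequation. Holland's theorem (as invoked in the proof of Lemma~\ref{l:elimLG}) lets me take the codomain to be $\m{Aut}(\tuple{\Omega,\le})$ for a totally ordered set $\Omega$, whereupon the pointwise interpretation of the join delivers a witness point $\omega_0\in\Omega$ with $(\omega_0)\psi(s_i)<\omega_0$ for every $i$. I would construct a right order on $\m{F}_g(X)$ by well-ordering $\Omega$ beginning with $\omega_0$ and defining a lex preorder: $f\prec g$ iff at the $\prec$-least $\omega$ with $(\omega)\psi(f)\ne(\omega)\psi(g)$, we have $(\omega)\psi(f)<(\omega)\psi(g)$. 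Right-invariance is immediate from the fact that each $\psi(h)$ preserves $\le$ on $\Omega$, and the placement of $\omega_0$ at the head of the well-ordering forces $\psi(s_i)$ to lie strictly below the identity under this preorder, so the opposite order (still a right order) yields $\e<s_i$ for every $i$.

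Main obstacle. The delicate point is upgrading this lex preorder to a \emph{total} right order on all of $\m{F}_g(X)$, not merely on the quotient $\m{F}_g(X)/\ker\psi$. An ad hoc lex refinement through a right order on $\ker\psi$ --- free by Nielsen-Schreier, hence right-orderable --- need not remain right-invariant on $\m{F}_g(X)$, because right multiplication by an arbitrary group element shuffles kernel representatives by conjugation. I would sidestep this by replacing $\Omega$ from the outset with a larger totally ordered $\m{F}_g(X)$-set on which $\m{F}_g(X)$ acts freely --- for instance the lex product of $\Omega$ with $\m{F}_g(X)$ carrying any fixed right order and acted on by right multiplication on the second coordinate --- so that the lex construction immediately produces a right-invariant total order on $\m{F}_g(X)$ with the positivity of each $s_i$ inherited from the witness $\omega_0$.
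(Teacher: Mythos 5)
The paper does not prove this theorem: it is imported verbatim as Theorem~2 of the cited work of Colacito and Metcalfe, so there is no in-paper argument to compare against. Judged on its own, your proof is correct and follows the standard route to the cited result. The forward direction is the Cayley--Holland embedding $g\mapsto (x\mapsto x\iv{g})$ of the right-ordered free group into $\m{Aut}(\tuple{{\rm F}_g(X),\le})$ with pointwise lattice operations, evaluated at the point $\e$; this is exactly right, provided you record the standard fact that the automorphism group of a chain under the pointwise order really is an $\ell$-group (pointwise maxima of order-automorphisms are again order-automorphisms). For the converse you correctly isolate the one delicate point: the first-difference lexicographic comparison over a well-ordering of $\Omega$ yields only a right-invariant total \emph{pre}order, whose classes are the cosets of $\ker\psi$. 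Your repair is sound: on the lexicographic product $\Omega\times{\rm F}_g(X)$, with ${\rm F}_g(X)$ carrying any fixed right order (free groups are orderable, so such an order exists independently of what is being proved) and acting by $\psi$ on the first coordinate and by right multiplication on the second, the action is free and by order-automorphisms, so comparing images of the single point $(\omega_0,\e)$ already defines a total right order in which each $s_i$ is strictly negative, and its reverse --- still a right order --- gives $\e<s_i$ for all $i$. The only phrasing to tighten is that the action must be the diagonal one, i.e.\ $\psi$ on the first coordinate \emph{and} right multiplication on the second: acting nontrivially only on the second coordinate would still be free and order-preserving but would return the auxiliary order and lose the positivity of the $s_i$, which is read off from the first coordinate at $\omega_0$. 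With that reading, the proof is complete.
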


\begin{corollary}\label{c:conporder}
The problem of checking for a set $X$ with $\lvert X \rvert \geq 2$ and $s_1,\ldots,s_n\in{\rm F}_g(X)$ if there exists a right order $\le$ on $\m{F}_g(X)$ satisfying $\e<s_1,\dots,\e<s_n$ is NP-complete.
\end{corollary}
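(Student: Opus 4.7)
The plan is to route the problem through the equational theory of $\cls{LG}$. By Theorem~\ref{t:rord-equiv}, the question ``does there exist a right order on $\m{F}_g(X)$ with $\e < s_1,\dots,\e < s_n$?'' is equivalent to the question ``is the $\lang_\ell$-inequation $\e \le s_1 \jn \cdots \jn s_n$ refuted in $\cls{LG}$?''. So I would reduce the right-order problem to (the complement of) a very restricted fragment of the equational theory of $\cls{LG}$ and pick up complexity from Proposition~\ref{p:basic-coNP}.

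Membership in NP is immediate: since the equational theory of $\cls{LG}$ lies in co-NP~\cite[Theorem~8.3]{GM16}, non-validity of the basic $\lang_\ell$-inequation $\e \le s_1 \jn \cdots \jn s_n$ in $\cls{LG}$ admits a polynomial-size witness, which by the equivalence serves as a witness for the right-order problem. For NP-hardness, I would reduce from the complement of the problem in Proposition~\ref{p:basic-coNP}, which is NP-hard. Given a basic $\lang_\ell$-inequation $\eps = (s \le t_1 \jn \cdots \jn t_k)$ with $s$ an $\lang_m$-term, $t_1,\dots,t_k$ $\lang_g$-terms, and variables in some set $Y$, I form the reduced words $u_i := \iv{s} t_i \in \m{F}_g(Y)$ (a polynomial-time computation on words), extend $Y$ by a dummy generator if $|Y| < 2$ to obtain $X$ with $|X| \ge 2$, and output $(X, u_1,\dots,u_k)$. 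Since left-multiplication by $\iv{s}$ is an order-isomorphism in every $\ell$-group, $\cls{LG} \models \eps$ iff $\cls{LG} \models \e \le u_1 \jn \cdots \jn u_k$, and Theorem~\ref{t:rord-equiv} then translates the latter to the required right-order question on $\m{F}_g(X)$.

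I do not foresee a substantive obstacle beyond bookkeeping: the only subtle point is the side condition $|X| \ge 2$, but this is harmless because the characterization of right-orderability provided by Theorem~\ref{t:rord-equiv} is independent of which ambient free group one works in (so long as it contains the elements involved), so padding $Y$ with an unused generator preserves the answer on both sides of the equivalence. The real work has already been done in proving Theorem~\ref{t:conp}, and this corollary is essentially a direct translation of Proposition~\ref{p:basic-coNP} across the bridge supplied by Theorem~\ref{t:rord-equiv}.
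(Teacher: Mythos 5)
Your reduction establishes NP-hardness only for the version of the problem in which the generating set is allowed to grow with the input: the basic $\lang_\ell$-inequations coming out of Proposition~\ref{p:basic-coNP} involve an unbounded number of variables, so the words $u_1,\dots,u_k$ you output live in free groups of unbounded rank. The corollary, however, asserts NP-completeness for each set $X$ with $\lvert X\rvert\ge 2$ --- in particular for the free group on exactly two generators, which is the reading matching the introduction's claim about free $n$-generated groups for $n\ge 2$ --- and this is precisely the case your argument does not reach. Padding $Y$ with a dummy generator handles $\lvert Y\rvert<2$, but the real difficulty runs the other way: you must compress instances over arbitrarily many variables into ${\rm F}_g(\set{x_1,x_2})$. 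Your preprocessing step $u_i:=\iv{s}t_i$ and the NP-membership argument are fine.

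The paper closes this gap by using that the commutator subgroup $\m{G}$ of $\m{F}_g(\set{x_1,x_2})$ is free of countably infinite rank, freely generated by the commutators $[x_1^k,x_2^l]$ with $k,l\in\Z{\setminus}\set{0}$, so that a bijection $\pi\colon(\Z{\setminus}\set{0})^2\to\N$ induces an isomorphism $\f\colon\m{F}_g(Y)\to\m{G}$ sending $y_{\pi(\tuple{k,l})}$ to $[x_1^k,x_2^l]$; choosing $\pi$ suitably makes the translation polynomial-time and polynomial-size. The step that genuinely requires proof is that the translation preserves the answer in both directions: restricting a right order on $\m{F}_g(X)$ to $\m{G}$ gives one direction, and for the converse one must show that every right order on $\m{G}$ extends to a right order on $\m{F}_g(X)$, which the paper does by exploiting that the quotient $\m{F}_g(X)/\m{G}\cong\tuple{\Z^2,+,-,\tuple{0,0}}$ is right-orderable and composing the two orders lexicographically. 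This encoding of countably many generators into two is the substantive content of the corollary beyond Proposition~\ref{p:basic-coNP} and Theorem~\ref{t:rord-equiv}, and it is absent from your proposal.
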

\begin{proof}
Observe first that when $X$ is an infinite set, the claim follows directly from Proposition~\ref{p:basic-coNP} and Theorem~\ref{t:rord-equiv}. To establish the claim in full generality, it suffices, by Theorem~\ref{t:rord-equiv}, to consider the case where $\lvert X \rvert =2$. Let $X:=\set{x_1,x_2}$ and $Y:= \set{y_n \mid n\in \N}$. It is well-known that $\m{F}_g(Y)$ is isomorphic to the commutator subgroup $\m{G}$ of $\m{F}_g(X)$ generated by elements of the form $[x_1^k,x_2^l]$ with $k,l \in\Z{\setminus}\{0\}$~(see, e.g.,~\cite[Theorem~11.48]{Rot94}). In particular, for any bijection $\pi\colon(\Z{\setminus}\{0\})^2\to\N$, we can define an isomorphism $\f\colon\m{F}_g(Y)\to\m{G}$ such that $\f(y_{\pi(\tuple{k,l})}) := [x_1^k,x_2^l]$ for $k,l \in \Z {\setminus} \{ 0 \}$.

Moreover, since $\m{F}_g(X)/\m{G} \cong \tuple{\Z^2, +,-,\tuple{0,0}}$, there exists a right order $\preceq$ on the group $\m{F}_g(X)/\m{G}$. Hence, for any right order $\leq$ on $\m{G}$, we obtain a right order $\leq^\ast$ on $\m{F}_g(X)$ that extends $\leq$ by setting 
\[
s \leq^\ast t \defiff Gs \prec Gt \,\text{ or }\, (Gs = Gt\, \text{ and }\, s \leq t).
\]
It follows that there exists a right order on $\m{F}_g(Y)$ satisfying $\e < s_1, \dots, \e < s_n$, for some given $s_1,\ldots,s_n\in{\rm F}_g(Y)$,  if and only if there exists a right order on $\m{F}_g(X)$ satisfying $\e < \f(s_1), \dots, \e < \f(s_n)$.  Finally, note that $\pi$ can be chosen such that  $\f(s_i)$ is computable in polynomial time from $s_i$ for each $i\in\{1,\dots,n\}$, and its size is polynomial in the sum of the sizes of $s_1,\dots, s_n$.
\end{proof}

By \cite[Corollary~3.4]{CGMS22}, every right order on $\m{F}_m(X)$ extends to a right order on $\m{F}_g(X)$. Hence it follows from Theorem~\ref{t:rord-equiv} that $\cls{LG} \not\models s \leq t_1\jn\cdots\jn t_n$, where $s, t_1,\dots, t_n  \in {\rm F}_m(X)$, if and only if there exists a right order $\leq$ on $\m{F}_m(X)$ with $s < t_1, \dots, s < t_n$. Proposition~\ref{p:simple-coNP} therefore implies that the problem of checking for any $s,t_1, \dots, t_n\in{\rm F}_m(\omega)$ if there exists a right order $\leq$ on $\m{F}_m(\omega)$ satisfying $s<t_1,\dots,s<t_n$ is NP-complete. Therefore:

\begin{corollary}\label{c:freemonoid2}
The problem of checking for any $s_1,\dots,s_n,t_1, \dots, t_n\in{\rm F}_m(\omega)$ if there exists a right order $\leq$ on $\m{F}_m(\omega)$ satisfying $s_1<t_1,\dots,s_n<t_n$ is NP-complete.
\end{corollary}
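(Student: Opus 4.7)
The plan is to establish NP-hardness and membership in NP separately, and to invoke the results already assembled in this section.

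For NP-hardness, I would argue by specialization: taking $s_1=\cdots=s_n$ recovers the common-left-hand-side problem treated in the paragraph immediately preceding the statement, and that problem is already known to be NP-complete (via Proposition~\ref{p:simple-coNP} and Theorem~\ref{t:rord-equiv}). Thus any algorithm deciding the more general problem also decides the specialized one, giving NP-hardness for free.

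The main content is NP membership. The plan is to establish the equivalence
\[
\text{$\exists$ right order $\le$ on $\m{F}_m(\omega)$ with $s_i<t_i$ for all $i$} \iff \cls{LG}\not\models\e\le t_1\iv{s_1}\jn\cdots\jn t_n\iv{s_n},
\]
and then to observe that the right-hand condition is in NP by Proposition~\ref{p:basic-coNP}, since the displayed $\lang_\ell$-inequation is basic and has size polynomial in the input. For the forward direction I would apply~\cite[Corollary~3.4]{CGMS22} to extend the given right order on $\m{F}_m(\omega)$ to a right order on $\m{F}_g(\omega)$; the extension still satisfies $s_i<t_i$, so right-multiplying by $\iv{s_i}$ yields $\e<t_i\iv{s_i}$ for all $i$, and Theorem~\ref{t:rord-equiv} delivers the non-validity. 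For the converse, Theorem~\ref{t:rord-equiv} supplies a right order on $\m{F}_g(\omega)$ satisfying $\e<t_i\iv{s_i}$; right-multiplying by $s_i$ gives $s_i<t_i$, and the restriction of this order to $\m{F}_m(\omega)\sst\m{F}_g(\omega)$ is the required right order on $\m{F}_m(\omega)$.

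I do not anticipate a genuine obstacle: the only subtle step is the passage between the monoid and the group setting, but this is handled cleanly by combining the extension result of~\cite{CGMS22} with Theorem~\ref{t:rord-equiv}, and the translation $s_i<t_i\leadsto\e<t_i\iv{s_i}$ is plainly polynomial-time computable and produces a basic $\lang_\ell$-inequation of the form required by Proposition~\ref{p:basic-coNP}.
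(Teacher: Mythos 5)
Your proof is correct and follows essentially the same route the paper intends: the paper derives this corollary from the preceding paragraph (the extension result of \cite[Corollary~3.4]{CGMS22} together with Theorem~\ref{t:rord-equiv} and the complexity of checking $\lang_\ell$-inequations in $\cls{LG}$), and your argument simply spells out the implicit step of normalizing the constraints to $\e<t_i\iv{s_i}$ in the free group so that Proposition~\ref{p:basic-coNP} gives NP membership, with NP-hardness by specialization to a common left-hand side. The only cosmetic point is that the words $t_i\iv{s_i}$ should be reduced before invoking Theorem~\ref{t:rord-equiv}, which is of course a polynomial-time step.
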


Note that it does not follow directly from the previous results that Corollary~\ref{c:freemonoid2} is true when $\m{F}_m(\omega)$ is replaced by $\m{F}_m(X)$ for a set $X$ with $2\le\lvert X \rvert <\omega$, the main obstacle being that the translation $\f$ in the proof of Corollary~\ref{c:conporder} introduces new inverses, while the elimination of inverses in the proof of Proposition~\ref{p:simple-coNP} introduces new variables. 

%%%%%%%%%%%%%%%%%%%%%%%%%%%%%%%%%%%%%%%%%%%%%%

\section{Related structures}\label{s:signature}

In this final section, we show that the results of the previous sections extend in many cases to other classes of algebraic structures that are closely related to idempotent semifields and $\ell$-groups.

Let us remark first that expanding any idempotent semifield with the lattice meet operation produces a {\em distributive $\ell$-monoid}: an algebraic structure $\tuple{L,\mt,\jn,\pd,\e}$ such that $\tuple{L,\pd,\e}$ is a monoid; $\tuple{L, \mt,\jn}$ is a distributive lattice; and multiplication distributes over binary meets and joins. It follows directly from Theorem~\ref{t:continuum} that there are continuum-many equational theories of classes of distributive $\ell$-monoids with an idempotent semifield reduct, and from Theorem~\ref{t:conp}, that the equational theory of the class of distributive $\ell$-monoids with an idempotent semifield reduct is co-NP-complete.  Although not all distributive $\ell$-monoids are meet-expansions of idempotent semifields (equivalently, inverse-free reducts of $\ell$-groups), the equational theories of the classes of distributive $\ell$-monoids and meet-expansions of idempotent semifields (equivalently, inverse-free reducts of $\ell$-groups) coincide~\cite[Theorem~2.9]{CGMS22}. Hence the equational theory of the class of meet-expansions of idempotent semifields is finitely based and an analogue of Theorem~\ref{t:basis} does not hold in this setting. Note, however, that even though the equational theory of the variety of Abelian $\ell$-groups is finitely based, this is not the case for the class of their inverse-free reducts~\cite[Theorem 2]{Rep83}. 

Recall next that idempotent semifields are sometimes formulated in the literature without the neutral element $\e$ in the signature, that is, as {\em $\e$-free reducts} of idempotent semifields as defined in this paper.  Observe, however, that a simple $\lang_s$-inequation $s\le t_1\jn\cdots\jn s_n$ is satisfied by an idempotent semifield $\m{S}$ if and only if its   $\e$-free reduct satisfies $(xs)^\circ\le (xt_1)^\circ\jn\cdots\jn (xs_n)^\circ$, where $x$ is any variable not occurring in $s,t_1,\dots,t_n$, and $v^\circ$ is obtained by removing all occurrences of $\e$ from an $\lang_s$-term $v$. Hence, we obtain easily the following analogues of Theorems~\ref{t:basis},~\ref{t:continuum}, and~\ref{t:conp}: there is no non-trivial class of $\e$-free reducts of idempotent semifields that is finitely based, there are continuum-many equational theories of classes of $\e$-free reducts of idempotent semifields, and the equational theory of the class of $\e$-free reducts of idempotent semifields is co-NP-complete.

Finally, recall that idempotent semirings are also sometimes formulated in the literature with both $\e$ and a constant symbol $0$ interpreted as the neutral element of $\jn$. We show here that analogues of our Theorems~\ref{t:basis},~\ref{t:continuum}, and~\ref{t:conp} also hold in this setting, using similar methods to~\cite[Section 4]{Aceto2003}. Let us call an algebraic structure $\tuple{F,\jn,\pd,\e,0}$ an {\em idempotent $0$-semiring} if $\tuple{F,\jn,\pd,\e}$ is an idempotent semiring with least element $0$, and an  \emph{idempotent 0-semifield} if, additionally, $\tuple{F{\setminus}\{0\},\pd,\e}$ is the monoid reduct of a group. Clearly, if $\m{F}=\tuple{F,\jn,\pd,\e,0}$ is an idempotent $0$-semifield, then $\m{F}^*:=\tuple{F{\setminus}\{0\},\jn,\pd,\e}$ is an idempotent semifield. Conversely, given any idempotent semiring $\m{S}=\tuple{S,\jn,\pd,\e}$ and element $0\not\in S$, the algebraic structure $\m{S}_0:=\tuple{S\cup\{0\},\jn,\pd,\e,0}$ satisfying $0\le a$ and $0\pd a = a\pd 0=0$ for all $a\in S\cup\{0\}$, is an idempotent $0$-semiring. In particular, if $\m{S}$ is an idempotent semifield, then $\m{S}_0$ is an idempotent $0$-semifield. Moreover, $(\m{F}^*)_0=\m{F}$ for each  idempotent $0$-semifield $\m{F}$, and $(\m{S}_0)^*=\m{S}$ for each idempotent semifield $\m{S}$. Hence there is a one-to-one correspondence between classes of idempotent $0$-semifields and classes of  idempotent semifields implemented by the maps $\cls{K}\mapsto\{\m{F}^*\mid\m{F}\in\cls{K}\}$ and $\cls{K}\mapsto \{\m{S}_0 \mid \m{S} \in \cls{K} \}$.

\begin{remark}\label{r:equivterm}
There is a linear time algorithm that given any term $t$ in the signature of idempotent $0$-semirings, produces a smaller term $t'$ that is either $0$ or an $\lang_s$-term such that $t\eq t'$ is satisfied by all idempotent $0$-semirings that satisfy the absorption laws $x \cdot 0 \eq 0$ and $0 \cdot x \eq 0$ (in particular, all idempotent $0$-semifields).
\end{remark}

Let us call a simple $\lang_s$-inequation $s\le t_1\jn\cdots\jn t_n$ \emph{right-regular} if every variable occurring in $t_1,\dots, t_n$ occurs in $s$. 

\begin{lemma}\label{l:correspondence}
Let $\m{S}$ be any idempotent semiring. Then a right-regular simple $\lang_s$-inequation is satisfied by $\m{S}$ if and only if it is satisfied by $\m{S}_0$.
\end{lemma}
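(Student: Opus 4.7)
The plan is to prove the two directions separately, observing that only one of them requires the right-regularity hypothesis. Fix a right-regular simple $\lang_s$-inequation $\eps=(s\le t_1\jn\cdots\jn t_n)$ with $s,t_1,\dots,t_n$ being $\lang_m$-terms.

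For the direction $\m{S}_0 \models \eps \Rightarrow \m{S}\models\eps$, I would simply note that $\m{S}$ is an $\lang_s$-subreduct of $\m{S}_0$: its carrier $S\subseteq S\cup\{0\}$ is closed under $\jn$, $\pd$, and contains $\e$, and the interpretations of these operations on $S$ in $\m{S}_0$ agree with those in $\m{S}$. Since $\eps$ is an $\lang_s$-equation (no occurrence of $0$), any homomorphism $\m{Tm}(\lang_s)\to\m{S}$ can be viewed as a homomorphism into $\m{S}_0$, so satisfaction transfers downward. This direction does not use right-regularity.

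For the reverse direction, I would assume $\m{S}\models\eps$ and argue contrapositively: given a homomorphism $\f\colon\m{Tm}(\lang_s)\to\m{S}_0$ with $\f(s)\not\le\f(t_1\jn\cdots\jn t_n)$, I split into two cases according to whether $\f$ sends some variable of $\eps$ to $0$. In the first case, right-regularity ensures that every variable appearing in the $t_i$'s also appears in $s$, so the offending variable occurs in~$s$; since $s$ is an $\lang_m$-term (a product of variables) and $0$ is absorbing in $\m{S}_0$, this forces $\f(s)=0$. But $0$ is the least element of $\m{S}_0$, so $\f(s)\le\f(t_1\jn\cdots\jn t_n)$, contradicting our assumption. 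In the remaining case, $\f$ maps every variable occurring in $\eps$ into $S$; extending this assignment to all variables by sending those not appearing in $\eps$ to, say, $\e$, one obtains a homomorphism $\tilde{\f}\colon\m{Tm}(\lang_s)\to\m{S}$ which agrees with $\f$ on $s$ and on each $t_i$ and hence still refutes $\eps$, contradicting $\m{S}\models\eps$.

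The argument is almost entirely routine; the only conceptual point is recognising that right-regularity is precisely what is needed to guarantee, in the first case above, that a variable of $\eps$ mapped to $0$ actually occurs on the left-hand side of $\eps$, so that absorption collapses $\f(s)$ to $0$. Without right-regularity, a variable could occur only among the $t_i$'s, and a $0$-assignment to it would collapse $\f(t_i)$ instead, yielding no contradiction (and, indeed, the analogous statement fails, as the contrast with left-regularity in Remark after Lemma~\ref{l:flat-group} already suggests).
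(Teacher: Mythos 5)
Your proof is correct and follows essentially the same route as the paper: the downward direction via the subreduct observation, and the upward direction by noting that right-regularity forces any variable of $\eps$ sent to $0$ to occur in $s$, so that absorption gives $\f(s)=0\le\f(t_1\jn\cdots\jn t_n)$ and the inequation holds trivially under such assignments. The paper states this more tersely, but the content is identical.
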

\begin{proof}
The right-to-left direction follows from the fact that $\m{S}$ is a subreduct of $\m{S}_0$. For the left-to-right direction, it suffices to observe that a right-regular simple $\lang_s$-inequation is satisfied by $\m{S}_0$ under any assignment that maps one of the variables occurring in it to $0$.
\end{proof}

\begin{lemma}\label{l:semifield-right-regular}
Let $\m{S}$ be any non-trivial idempotent semifield. Then for any simple $\lang_s$-inequation $\eps = (s \le t_1\jn\cdots\jn t_n)$ satisfied by $\m{S}$, there exists a subset $\{t_{i_1}, \dots, t_{i_k} \} \subseteq \{t_1,\dots, t_n \}$ such that the simple $\lang_s$-inequation $s \le t_{i_1}\jn\cdots\jn t_{i_k}$ is right-regular and $\m{S} \models s \le t_{i_1}\jn\cdots\jn t_{i_k}$.
\end{lemma}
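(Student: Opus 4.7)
Plan. I argue by induction on $|W|$ where $W := \bigcup_{j=1}^n \Var(t_j) \setminus \Var(s)$. When $W = \emptyset$, the full set $\{t_1,\dots,t_n\}$ already yields a right-regular inequation. For the inductive step, fix any $y \in W$ and set $J_y := \{j : y \notin \Var(t_j)\}$. It suffices to show that $J_y \ne \emptyset$ and $\m{S}\models s\le\bigvee_{j\in J_y} t_j$: since $y$ no longer occurs on the right-hand side, the resulting sub-inequation has strictly fewer extra variables, and the induction hypothesis then delivers a subset of $\{t_j : j \in J_y\} \subseteq \{t_1,\dots,t_n\}$ giving a right-regular sub-inequation satisfied by $\m{S}$.

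The main tool, proved in the underlying $\ell$-group $\m{L}$ of $\m{S}$ (which shares the carrier of $\m{S}$ and provides meet and inverse), is the sub-claim: for any $\lang_m$-term $t = u_0 y u_1 y \cdots y u_k$ with $k \ge 1$, any valuation $\f$ of the variables other than $y$, and any $T \in \m{L}$, there exists $c \in \m{L}$ with $t(c,\ldots) \le T$. Writing $A_i := \f(u_i)$, $P_i := A_0 \cdots A_{i-1}$ for $1 \le i \le k$, and $D := A_0 A_1 \cdots A_k$, a direct induction on $k$ yields the conjugation decomposition
\[
t(c,\ldots) = (P_1 c P_1^{-1})(P_2 c P_2^{-1}) \cdots (P_k c P_k^{-1})\cdot D.
\]
Let $Z := TD^{-1}$ and $Z^- := Z \mt \e$; then $Z^- \le \e$, so $(Z^-)^k \le Z^- \le Z$. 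Taking $c := \bigwedge_{i=1}^k P_i^{-1} Z^- P_i$ in $\m{L}$ gives $P_i c P_i^{-1} \le Z^-$ for each $i$, so order-preservation of multiplication in $\m{L}$ yields $\prod_{i=1}^k P_i c P_i^{-1} \le (Z^-)^k \le Z$, and hence $t(c,\ldots) \le Z \cdot D = T$.

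Granted the sub-claim, fix any valuation $\f$ into $\m{S}$. Applying it to each $t_j$ with $y \in \Var(t_j)$ with a common target $T$ and taking $c := \bigwedge_{j : y \in \Var(t_j)} c_j$ in $\m{L}$, the monotonicity of each such $t_j$ in $y$ (again from order-preservation of multiplication) ensures that the modified valuation $\f'$ with $\f'(y) = c$ satisfies $\f'(t_j) \le T$ for every $j$ with $y \in \Var(t_j)$. If $J_y$ were empty, choosing $T$ strictly below $\f(s)$ (possible because $\m{L}$ is non-trivial) would force $\bigvee_j \f'(t_j) \le T < \f(s) = \f'(s)$, contradicting $\m{S}\models \eps$; hence $J_y \ne \emptyset$. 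For the reduction itself, suppose towards a contradiction that some $\f$ satisfies $\f(s) \not\le M := \bigvee_{j \in J_y} \f(t_j)$, and apply the above with $T := M$; since $\f'(t_j) = \f(t_j)$ for $j \in J_y$, we get $\bigvee_j \f'(t_j) \le M$, and $\m{S}\models\eps$ then forces $\f(s) = \f'(s) \le M$, a contradiction.

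The main obstacle is controlling $t(c,\ldots)$ uniformly in a potentially non-commutative and non-archimedean $\ell$-group; the conjugation identity together with the elementary bound $(Z^-)^k \le Z$ bypasses this by reducing the choice of $c$ to a single meet of conjugates of the negative part of the target, after which the monotonicity of multiplication delivers the required estimate.
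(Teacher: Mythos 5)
Your proof is correct and follows essentially the same strategy as the paper's: eliminate the extra variables one at a time by assigning each one a sufficiently small element of the underlying $\ell$-group (a meet of inverses/conjugates built from the $y$-free subterms and the target) so that every term containing that variable drops below the join of the remaining terms, with non-triviality used exactly where you use it to rule out $J_y=\emptyset$. The only difference is in the computation of the witness: the paper takes the meet of the elements $\iv{\f(u)}\mt\iv{\f(u)}\f(t_1)\iv{\f(v)}$ over $y$-free subterms $u,v$ and verifies the bound by direct telescoping, whereas you use the conjugation decomposition together with the estimate $(Z\mt\e)^k\le Z$.
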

\begin{proof}
Note first that, since $\m{S}$ is non-trivial, there cannot be a variable $x$ that occurs in each of $t_1,\dots,t_n$ but not in $s$; otherwise, we could assign $x$ to some element $a<\e$ in $S$ and all other variables to $\e$ to arrive at a contradiction. Hence it suffices to prove that (assuming a suitable permutation of $t_1,\dots,t_n$) if a variable $x$ occurs in each of $t_{k+1},\dots,t_n$, but not in $s,t_1,\dots,t_k$ for some $k<n$, then $\m{S}$ satisfies $s \le t_1\jn\cdots\jn t_k$. In this way we can inductively eliminate all the terms on the right that contain a variable that does not occur in $s$ and obtain a right-regular simple $\lang_s$-inequation.

Suppose contrapositively that $\m{S}\not\models s \le t_1\jn\cdots\jn t_k$, that is, there exists a homomorphism $\f\colon\m{Tm}(\lang_s)\to\m{S}$ such that $\f(s)\not\le \f(t_1)\jn\cdots\jn\f(t_k)$. We may assume that for $i\in \{k+1,\dots, n \}$, each $t_i$ is of the form $u_1xu_2x\cdots u_{l-1}xu_{l}$, where $u_1,\dots,u_l$ do not contain $x$, by inserting $\e$ where needed. 
Let $\hat{\f}\colon\m{Tm}(\lang_s)\to\m{S}$ be the homomorphism defined by setting $\hat{\f}(y):=\f(y)$ for every variable $y\neq x$ and $\hat{\f}(x)$ to be the meet of all the elements of the form $\iv{\f(u)} \mt \iv{\f(u)}\f(t_1)\iv{\f(v)}$ such that $u,v$ are subterms of $t_{k+1},\dots,t_n$ not containing $x$. Then for each $i\in \{k+1,\dots, n\}$ and $t_i = u_1 x u_2x \cdots u_{l-1}xu_l$, where $u_1,\dots,u_l$ do not contain $x$,
\begin{align*}
\hat{\f}(t_i) &= \f(u_1) \hat{\f}(x) \f(u_2)\hat{\f}(x) \cdots \f(u_{l-1}) \hat{\f}(x) \f(u_l) \\
			    &\leq \f(u_1)  \iv{\f(u_1)} \f(u_2) \iv{\f(u_2)}\ \cdots \f(u_{l-1}) \iv{\f(u_{l-1})} \f(t_1) \iv{\f(u_l)} \f(u_l) \\
			    &= \f(t_1).
\end{align*}
Hence $\hat{\f}(t_{k+1})\jn\cdots\jn\hat{\f}(t_n)\le\f(t_1)$, so $\hat{\f}(s)=\f(s)\not\le\f(t_1)\jn\cdots\jn\f(t_k)=\hat{\f}(t_1)\jn\cdots\jn\hat{\f}(t_n)$, and $\m{S}\not\models s \le t_1\jn\cdots\jn t_n$. 
\end{proof}

The equational theory of any non-empty class of idempotent $0$-semifields containing exactly two elements has a finite basis consisting of the defining equations for bounded distributive lattices with meet operation $\pd$, greatest element $\e$, and least element $0$. For convenience, let us call a class of idempotent $0$-semifields {\em non-Boolean} if at least one of its members has more than two elements. Clearly, a class $\cls{K}$ of idempotent $0$-semifields is non-Boolean if and only if $\cls{K}^*$ is non-trivial.

\begin{corollary}\label{c:0-fin-base}
Every non-Boolean class of idempotent $0$-semifields is not finitely based.
\end{corollary}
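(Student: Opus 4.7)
The plan is to argue by contradiction, reducing to Theorem~\ref{t:basis}: assuming that $\EqT(\cls{K})$ has a finite basis, I would construct a finite basis for $\EqT(\cls{K}^*)$, where $\cls{K}^* := \{\m{F}^*\mid \m{F}\in\cls{K}\}$. Since $\cls{K}$ is non-Boolean, $\cls{K}^*$ is a non-trivial class of idempotent semifields, so Theorem~\ref{t:basis} will deliver the required contradiction.

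Suppose, then, that $\Sigma$ is a finite basis for $\EqT(\cls{K})$. I would first reshape $\Sigma$ into a more tractable form. By Remark~\ref{r:equivterm}, each equation in $\Sigma$ can be replaced by an equivalent one whose sides are either $0$ or $\lang_s$-terms; among the non-trivial survivors, any equation of the form $s\eq 0$ with $s$ an $\lang_s$-term is ruled out, since the assignment sending every variable to $\e$ maps $s$ to an element of $F^*$ in any non-Boolean $\m{F}\in\cls{K}$. The standard reasoning underlying Remark~\ref{r:simplesuffice} (which uses only the semilattice, distributivity and order properties, all valid in idempotent $0$-semirings) then converts the remaining $\lang_s$-equations into a finite set $\Pi$ of simple $\lang_s$-inequations, so that $\Sigma_1 := \Delta\cup\Pi$ is still a finite basis for $\EqT(\cls{K})$, where $\Delta$ is a fixed finite axiomatisation of idempotent $0$-semirings. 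For each $\eps = (s\leq t_1\jn\cdots\jn t_n)\in\Pi$, I would then apply the purely syntactic procedure implicit in the proof of Lemma~\ref{l:semifield-right-regular} --- iteratively delete each $t_i$ that contains a variable not appearing in $s$ --- to produce a right-regular simple $\lang_s$-inequation $\eps'$, and finally take $\Sigma^* := \Delta'\cup\{\eps'\mid\eps\in\Pi\}$, where $\Delta'$ is a finite axiomatisation of idempotent semirings.

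It then remains to verify that $\Sigma^*$ is a finite basis for $\EqT(\cls{K}^*)$. Soundness is routine: restricting assignments from $\m{F}$ to $\m{F}^*$ gives $\cls{K}^*\models\Pi$, after which Lemma~\ref{l:semifield-right-regular} (applied to non-trivial members, and trivially to trivial ones) yields $\cls{K}^*\models\eps'$ for every $\eps\in\Pi$. The main subtlety is completeness: given an idempotent semiring $\m{A}\models\Sigma^*$ and some $\eta\in\EqT(\cls{K}^*)$, one must show $\m{A}\models\eta$. I would argue this by passing to the $0$-extension $\m{A}_0$: since every $\eps'\in\Sigma^*$ is right-regular, Lemma~\ref{l:correspondence} gives $\m{A}_0\models\eps'$, and $\eps'$ implies $\eps$ because the join on the right only shrinks, so $\m{A}_0\models\Sigma_1$ and hence $\m{A}_0\models\EqT(\cls{K})$. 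Transforming $\eta$ by the same two-step procedure --- first to simple $\lang_s$-inequations $\eta_1,\ldots,\eta_m$ via Remark~\ref{r:simplesuffice}, then to their right-regular versions $\eta_i'$ --- yields $\cls{K}^*\models\eta_i'$, which by Lemma~\ref{l:correspondence} is equivalent to $\cls{K}\models\eta_i'$, so $\m{A}_0\models\eta_i'$. A final application of Lemma~\ref{l:correspondence} in the opposite direction gives $\m{A}\models\eta_i'$, hence $\m{A}\models\eta_i$ and $\m{A}\models\eta$. The crux of the whole argument is this round-trip $\m{A}\to\m{A}_0\to\m{A}$ mediated by the symmetry of Lemma~\ref{l:correspondence} for right-regular inequations, which is what allows the $0$-free theory $\EqT(\cls{K}^*)$ to inherit finite axiomatisability from the $0$-enriched theory $\EqT(\cls{K})$.
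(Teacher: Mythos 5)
Your proof is correct and follows essentially the same route as the paper's: reduce a putative finite basis for $\EqT(\cls{K})$ to right-regular simple $\lang_s$-inequations via Remark~\ref{r:equivterm}, Remark~\ref{r:simplesuffice}, and Lemma~\ref{l:semifield-right-regular}, then use the round-trip between $\m{A}$ and $\m{A}_0$ afforded by Lemma~\ref{l:correspondence} to obtain a finite basis for $\EqT(\cls{K}^*)$, contradicting Theorem~\ref{t:basis}. You merely spell out more explicitly the step the paper compresses into the inclusion $\EqT(\cls{K}^*)\subseteq\EqT(\cls{K})$, namely the transformation of an arbitrary $\eta\in\EqT(\cls{K}^*)$ into right-regular form.
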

\begin{proof}
Suppose towards a contradiction that $\cls{K}$ is a finitely based non-Boolean class of idempotent $0$-semifields. Using Remark~\ref{r:equivterm} and Lemma~\ref{l:semifield-right-regular}, we may assume that $\Si\cup\{0\le x, x\cdot 0 \eq 0, 0\cdot x \eq 0\}$ is a basis for $\EqT(\cls{K})$ for some finite set of right-regular simple $\lang_s$-inequations $\Si$. 
That is, $\Si\cup\{0\le x, x\cdot 0 \eq 0, 0\cdot x \eq 0\} \subseteq\EqT(\cls{K})$ and $\EqT(\cls{K})$ is a logical consequence of $\Si\cup\{0\le x, x\cdot 0 \eq 0, 0\cdot x \eq 0\}$. We claim that $\Si$ is a basis for $\EqT(\cls{K}^*)$, contradicting Theorem~\ref{t:basis}. 
Observe first that $\cls{K}\models\eps$ if and only if $\cls{K}^*\models\eps$,  for any right-regular simple $\lang_s$-inequation $\eps$, by Lemma~\ref{l:correspondence}, so $\Si\subseteq\EqT(\cls{K}^*)\subseteq\EqT(\cls{K})$. Now suppose that some idempotent semiring $\m{A}$ satisfies $\Si$. Then $\m{A}_0$ satisfies $\Si$,  by Lemma~\ref{l:correspondence}, and $\m{A}_0$ therefore satisfies $\EqT(\cls{K})$. So $\m{A}$ satisfies $\EqT(\cls{K}^*)$. Hence $\EqT(\cls{K}^*)$ is a logical consequence of $\Si$.
\end{proof}

If $\cls{K}$ and $\cls{K'}$ are classes of idempotent semifields with distinct equational theories, then there is a simple $\lang_s$-inequation that is satisfied by one and not the other, so $\cls{K}_0$ and $\cls{K}'_0$ also have distinct equational theories, by Lemma~\ref{l:correspondence} and Lemma~\ref{l:semifield-right-regular}. Hence, by Theorem~\ref{t:continuum}:

\begin{corollary}
There are continuum-many equational theories of classes of idempotent $0$-semifields.
\end{corollary}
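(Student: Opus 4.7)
The plan is to transport the continuum of equational theories from Theorem~\ref{t:continuum} across the correspondence $\m{S}\leftrightarrow\m{S}_0$, using Lemma~\ref{l:semifield-right-regular} to extract a right-regular distinguishing inequation and Lemma~\ref{l:correspondence} to transfer it to the $0$-semifield setting. The delicate point is that Lemma~\ref{l:semifield-right-regular} only guarantees a right-regular sub-inequation for a single non-trivial semifield --- distinct members of a class could witness distinct sub-inequations of a common $\eps$ --- so the cleanest route is to first reduce to single-algebra equational theories.

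By Theorem~\ref{t:continuum} there are continuum-many pairwise distinct equational theories $\EqT(\cls{K}_\alpha)$ of classes of idempotent semifields; after discarding the at most one theory of a purely trivial class, I may assume every $\cls{K}_\alpha$ contains a non-trivial semifield. Since idempotent semifields are closed under direct products (products of group reducts are groups), for each $\alpha$ one can pick a single non-trivial idempotent semifield $\m{S}_\alpha$ --- for instance, a suitable direct product of countably many members of $\cls{K}_\alpha$ chosen to refute every $\lang_s$-equation outside $\EqT(\cls{K}_\alpha)$ --- with $\EqT(\m{S}_\alpha)=\EqT(\cls{K}_\alpha)$. The semifields $\m{S}_\alpha$ thus have continuum-many pairwise distinct equational theories.

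Now fix any $\alpha\neq\beta$. By Remark~\ref{r:simplesuffice} there is a simple $\lang_s$-inequation $\eps$ with, say, $\m{S}_\alpha\models\eps$ and $\m{S}_\beta\not\models\eps$. Since $\m{S}_\alpha$ is non-trivial and $\m{S}_\alpha\models\eps$, Lemma~\ref{l:semifield-right-regular} furnishes a right-regular sub-inequation $\eps^\ast$ of $\eps$ --- obtained by retaining only a subset of the joinands on the right --- with $\m{S}_\alpha\models\eps^\ast$. Because $\eps^\ast$ is logically stronger than $\eps$, also $\m{S}_\beta\not\models\eps^\ast$. Applying Lemma~\ref{l:correspondence} to the right-regular inequation $\eps^\ast$ then yields $(\m{S}_\alpha)_0\models\eps^\ast$ and $(\m{S}_\beta)_0\not\models\eps^\ast$, so $\EqT((\m{S}_\alpha)_0)\neq\EqT((\m{S}_\beta)_0)$. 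Since each $(\m{S}_\alpha)_0$ is an idempotent $0$-semifield, the singleton classes $\{(\m{S}_\alpha)_0\}$ witness continuum-many pairwise distinct equational theories of classes of idempotent $0$-semifields.
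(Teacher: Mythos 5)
Your proof is correct, and at its core it is the paper's own argument: transport the continuum of theories from Theorem~\ref{t:continuum} through the correspondence $\m{S}\mapsto\m{S}_0$, using Lemma~\ref{l:semifield-right-regular} to replace a distinguishing simple $\lang_s$-inequation by a right-regular one and Lemma~\ref{l:correspondence} to transfer it to the $0$-signature. The one genuine difference is your preliminary reduction to single algebras via countable direct products, introduced to handle the worry that distinct members of a class might witness distinct right-regular sub-inequations of a common $\eps$. That reduction is sound (the complement of $\EqT(\cls{K}_\alpha)$ is countable, equations are preserved by products, and projections reflect failures; your bookkeeping about discarding the trivial theory and ensuring non-triviality of the products is also handled correctly), but it is not needed: among the right-regular sub-inequations $s\le t_{i_1}\jn\cdots\jn t_{i_k}$ of $\eps=(s\le t_1\jn\cdots\jn t_n)$ there is a weakest one, namely the one retaining exactly those $t_i$ all of whose variables occur in $s$, and every other right-regular sub-inequation implies it. Hence Lemma~\ref{l:semifield-right-regular} already yields a single right-regular inequation satisfied by every non-trivial member of a class satisfying $\eps$ (and vacuously by its trivial members), so one can argue directly at the level of classes, as the paper does in its two-line proof. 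Your detour buys independence from this uniformity observation at the cost of an extra construction; both routes are valid.
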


Finally, combining Remark~\ref{r:equivterm}, Lemma~\ref{l:correspondence}, Lemma~\ref{l:semifield-right-regular}, and Theorem~\ref{t:conp}, we obtain:

\begin{corollary}
The equational theory of the class of idempotent $0$-semifields is co-NP-complete.
\end{corollary}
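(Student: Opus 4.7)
The plan is to polynomially reduce the equational theory of the class $\cls{K}_0$ of idempotent $0$-semifields to that of the class $\cls{K}$ of idempotent semifields, which is co-NP-complete by Theorem~\ref{t:conp}, and to obtain co-NP-hardness by the identity reduction in the opposite direction.

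Given an input equation $\eps$ in the signature of idempotent $0$-semirings, I would first apply Remark~\ref{r:equivterm} to produce in linear time an equivalent equation $s' \eq t'$ where each of $s', t'$ is either $0$ or an $\lang_s$-term. The degenerate cases are resolved directly: both sides $0$ is trivially valid, while exactly one side $0$ and the other an $\lang_s$-term $u$ is invalid because assigning every variable of $u$ to $\e$ falsifies the equation in any idempotent $0$-semifield (recalling $\e \neq 0$ in such structures).

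The key step is then to show that $\cls{K}_0 \models \eps \iff \cls{K} \models \eps$ for every $\lang_s$-equation $\eps$. The direction $\cls{K}_0 \models \eps \Rightarrow \cls{K} \models \eps$ is immediate, since each $\m{S} \in \cls{K}$ is an $\lang_s$-subreduct of $\m{S}_0 \in \cls{K}_0$. For the converse I would invoke Remark~\ref{r:simplesuffice} to reduce the task to simple $\lang_s$-inequations $\eta = s \le t_1 \jn \cdots \jn t_n$; given such an $\eta$, let $\eta^{RR}$ denote the right-regular sub-inequation obtained by deleting every $t_i$ with $\Var(t_i) \not\subseteq \Var(s)$. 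Iterating Lemma~\ref{l:semifield-right-regular}---selecting at each stage a variable outside $\Var(s)$ and dropping every $t_i$ containing it---gives $\cls{K} \models \eta \iff \cls{K} \models \eta^{RR}$, and Lemma~\ref{l:correspondence} applied to the right-regular $\eta^{RR}$ yields $\cls{K} \models \eta^{RR} \iff \cls{K}_0 \models \eta^{RR}$; weakening then closes the chain via $\cls{K}_0 \models \eta^{RR} \Rightarrow \cls{K}_0 \models \eta$.

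Combining these, co-NP membership of $\EqT(\cls{K}_0)$ will follow from the polynomial-time reduction above and Theorem~\ref{t:conp}, while co-NP-hardness will follow from the identity reduction applied to $\lang_s$-equations of $\EqT(\cls{K})$. The main technical point is justifying that the iterative use of Lemma~\ref{l:semifield-right-regular} terminates at the canonical $\eta^{RR}$; a mild edge case arises when $\eta^{RR}$ has empty right-hand side, which must be handled by observing that both $\cls{K}$ and $\cls{K}_0$ then fail to satisfy $\eta$ on any non-trivial member, so the equivalence is preserved vacuously.
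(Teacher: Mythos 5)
Your proposal is correct and follows essentially the same route as the paper, which likewise obtains the result by combining Remark~\ref{r:equivterm}, Lemma~\ref{l:correspondence}, Lemma~\ref{l:semifield-right-regular}, and Theorem~\ref{t:conp}; your write-up simply makes explicit the chain $\cls{K}\models\eta\Leftrightarrow\cls{K}\models\eta^{RR}\Leftrightarrow\cls{K}_0\models\eta^{RR}\Leftrightarrow\cls{K}_0\models\eta$ that the paper leaves implicit. Your observations that the right-regular subset produced by Lemma~\ref{l:semifield-right-regular} is the canonical one (hence uniform over the class) and that the simple-inequation decomposition is only a proof device rather than part of the polynomial-time reduction are exactly the points needed to make the paper's one-line argument rigorous.
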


%%%%%%%%%%%%%%%%%%%%%%%%%%%%%%%%%%%%%%%%%%%%%%

%%%%%%%%%%%%%%%%%%%%%%%%%%%%%%%%%%%%%%%%%%%%%%

\affiliationone{
   G. Metcalfe and S. Santschi\\
   Mathematical Institute, University of Bern, Sidlerstrasse 5, 3012 Bern\\ 
   Switzerland
   \email{george.metcalfe@unibe.ch\\
   simon.santschi@unibe.ch}}
   

\begin{thebibliography}{10}

\bibitem{Aceto2003}
L.~Aceto, Z.~{\'E}sik, and A.~Ing{\'o}lfsd{\'o}ttir.
\newblock Equational theories of tropical semirings.
\newblock {\em Theor. Comput. Sci.}, 298(3):417--469, 2003.

\bibitem{AF88}
M.E. Anderson and T.H. Feil.
\newblock {\em Lattice-Ordered Groups: An Introduction}.
\newblock Springer, 1988.

\bibitem{CGMS22}
A.~Colacito, N.~Galatos, G.~Metcalfe, and S.~Santschi.
\newblock From distributive $\ell$-monoids to $\ell$-groups, and back again.
\newblock {\em J. Algebra}, 601:129--148, 2022.

\bibitem{CM19}
A.~Colacito and G.~Metcalfe.
\newblock Ordering groups and validity in lattice-ordered groups.
\newblock {\em J. Pure Appl. Algebra}, 223(12):5163--5175, 2019.

\bibitem{GM16}
N.~Galatos and G.~Metcalfe.
\newblock Proof theory for lattice-ordered groups.
\newblock {\em Ann. Pure Appl. Logic}, 8(167):707--724, 2016.

\bibitem{Gol99}
J.S. Golan.
\newblock {\em Semirings and their Applications}.
\newblock Kluwer, 1999.

\bibitem{Gun98}
J.~Gunawardena, editor.
\newblock {\em Idempotency}, volume~11 of {\em Publications of the Newton
  Institute}.
\newblock Cambridge University Press, 1998.

\bibitem{HW95}
U.~Hebisch and H.J. Weinert.
\newblock Semi-rings and semi-fields.
\newblock In M.~Hazewinkel, editor, {\em Handbook of Algebra}, volume~1, pages
  427--462. North Holland, 1995.

\bibitem{Hol76}
W.C. Holland.
\newblock The largest proper variety of lattice-ordered groups.
\newblock {\em Proc. Amer. Math. Soc.}, 57:25--28, 1976.

\bibitem{HRB87}
H.B.~Hunt III, D.J. Rosenkrantz, and P.A. Bloniarz.
\newblock On the computational complexity of algebra on lattices.
\newblock {\em SIAM J. Comput.}, 16(1):129--148, 1987.

\bibitem{Jac08}
M.~Jackson.
\newblock Flat algebras and the translation of universal horn logic to
  equational logic.
\newblock {\em J. Symbolic Logic}, 73(1):90--128, 2008.

\bibitem{JRZ22}
M.~Jackson, M.~Ren, and X.~Zhao.
\newblock Nonfinitely based ai-semirings with finitely based semigroup reducts.
\newblock {\em J. Algebra}, 611:211--245, 2022.

\bibitem{KM77}
V.M. Kopytov and N.Y. Medvedev.
\newblock Varieties of lattice-ordered groups.
\newblock {\em Algebra and Logic}, 16:281--285, 1977.

\bibitem{McC82}
S.~McCleary.
\newblock The word problem in free normal valued lattice-ordered groups: a
  solution and practical shortcuts.
\newblock {\em Algebra Universalis}, 14:317--348, 1982.

\bibitem{MPT23}
G.~Metcalfe, F.~Paoli, and C.~Tsinakis.
\newblock {\em Residuated Structures in Algebra and Logic}, volume 277 of {\em
  Mathematical Surveys and Monographs}.
\newblock American Mathematical Society, 2023.

\bibitem{Rep83}
V.~B. Repnitski\u{\i}.
\newblock Bases of identities of varieties of lattice-ordered semigroups.
\newblock {\em Algebra i Logika}, 22(6):649--665, 720, 1983.

\bibitem{Rot94}
J.J. Rotman.
\newblock {\em An Introduction to the Theory of Groups}, volume 148 of {\em
  Graduate Texts in Mathematics}.
\newblock Springer, 4 edition, 1994.

\bibitem{Wei86}
V.~Weispfenning.
\newblock The complexity of the word problem for {Abelian} $l$-groups.
\newblock {\em Theor. Comput. Sci.}, 48:127--132, 1986.

\end{thebibliography}
   \end{document}